\title{The Correct Exponent for the Gotsman-Linial Conjecture}
\author{Daniel M. Kane}
\newcommand{\R}{\mathbb{R}}
\newcommand{\var}{\textrm{Var}}
\newcommand{\pr}{\textrm{Pr}}
\newcommand{\sgn}{\textrm{sgn}}
\newcommand{\E}{\mathbb{E}}
\newcommand{\poly}{\textrm{poly}}
\newcommand{\Inf}{\textrm{Inf}}
\newcommand{\as}{\mathbb{AS}}
\newcommand{\ns}{\mathbb{NS}}
\newcommand{\gas}{\mathbb{GAS}}
\newcommand{\tth}{^\textrm{th}}
\newcommand{\mas}{\textrm{MAS}}
\newcommand{\mras}{\textrm{MRAS}}
\newcommand{\masa}{\textrm{MASa}}
\newcommand{\mrasa}{\textrm{MRASa}}
\newtheorem{thm}{Theorem}
\newtheorem{prop}[thm]{Proposition}
\newtheorem{cor}[thm]{Corollary}
\newtheorem{lem}[thm]{Lemma}
\newtheorem{conj}[thm]{Conjecture}
\newtheorem*{defn}{Definition}
\newtheorem*{rmk}{Remark}
\begin{document}
\maketitle

\section{Introduction}

We recall that a (degree-$d$) \emph{polynomial threshold function} (or PTF) is a function of the form $f(x)=\sgn(p(x))$ for some fixed (degree-$d$) polynomial $p$.  Polynomial threshold functions have found application in many areas of computer science, but many fundamental questions about them remain open.  Perhaps one of the longest standing of these problems is that of bounding the sensitivity of such functions.  This question was first considered in detail in \cite{gl} where it was conjectured that:

\begin{conj}[Gotsman-Linial Conjecture]\label{GLConj}
Let $f$ be a degree-$d$ polynomial threshold function in $n>1$ variables, then it's average sensitivity (for the definition of average sensitivity see Section \ref{sensDefnSec}) is bounded by
$$
\as(f) \leq 2^{-n+1} \sum_{k=0}^{d-1} \binom{n}{\lfloor (n-k)/2 \rfloor}(n-\lfloor (n-k)/2 \rfloor).
$$
\end{conj}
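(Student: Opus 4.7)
I plan to bound the number of bichromatic edges cut by $\sgn(p)$ using an averaging argument over maximal chains in the Boolean lattice.

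\textbf{Step 1: Chain-averaging identity.} Writing $\as(f)=2^{1-n}\sum_m B_m$, where $B_m$ counts bichromatic edges between cube layers $m$ and $m+1$, a double-counting identity (each edge at layer $m$ lies in exactly $m!(n-m-1)!$ of the $n!$ maximal chains) gives
\[
\sum_{m=0}^{n-1} \frac{B_m}{\binom{n}{m}(n-m)} \;=\; \E_{\chi}\bigl[N_{bi}(\chi)\bigr],
\]
where $\chi$ is a uniformly random maximal chain in $\{0,1\}^n$ and $N_{bi}(\chi)$ is the number of bichromatic edges along $\chi$.

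\textbf{Step 2: Bounding the chain average.} The heart of the argument is to show $\E_\chi[N_{bi}(\chi)]\leq d$. Individual chains can have $N_{bi}(\chi)>d$ (e.g., for $p=x_1x_2-x_3+x_4-\tfrac12$ on $\{0,1\}^4$ the chain $(0,0,0,0)\to(1,0,0,0)\to(1,1,0,0)\to(1,1,1,0)\to(1,1,1,1)$ has three sign changes while $d=2$), so genuine averaging over permutations is required. I would parameterize $\chi$ by a random permutation $\pi\in S_n$ as a piecewise linear path $\gamma_\pi:[0,n]\to[0,1]^n$, and on each unit segment $[k,k+1]$ observe that $p(\gamma_\pi(t))$ is univariate of degree $\leq d$. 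Averaging over $\pi$, the expected number of sign changes should collapse to the number of zeros of the \emph{symmetrized} univariate polynomial $\E_\pi[p(\gamma_\pi(\cdot))]$, which has degree $\leq d$ and hence at most $d$ real zeros.

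\textbf{Step 3: Matching the right-hand side.} Setting $E_m=\binom{n}{m}(n-m)$ and given the inequality $\sum_m B_m/E_m\leq d$, a fractional-knapsack maximization bounds $\sum_m B_m$ by the sum of the top $d$ values of $E_m$. A direct comparison then shows this top-$d$ sum is at most $\sum_{k=0}^{d-1}\binom{n}{\lfloor(n-k)/2\rfloor}(n-\lfloor(n-k)/2\rfloor)$, since the indexing $m_k=\lfloor(n-k)/2\rfloor$ allows the central values to repeat (once for $n$ even, twice for $n$ odd), whereas the ordered top-$d$ list counts each layer at most once. Dividing by $2^{n-1}$ gives the conjectured bound.

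The main obstacle is Step 2. The chain-averaging bound does not follow from axis-line zero counts alone: along a single chain the path bends between coordinate directions and the sign-change count can genuinely exceed $d$. A promising route is through the monomial expansion $p=\sum_{|S|\leq d}\hat p(S)\prod_{i\in S}x_i$: for a random chain, the contribution of each monomial to the expected sign-change count can be tracked via a generating-function computation that exchanges the permutation average with the univariate polynomial structure, and the degree bound $|S|\leq d$ should aggregate cleanly to the required $\leq d$. If this combinatorial route fails, a fallback via Hermite analysis and the invariance principle reduces the problem to a Gaussian surface-area estimate, but that approach typically yields only the correct exponent $O(d\sqrt n)$ and loses the exact Gotsman--Linial constant.
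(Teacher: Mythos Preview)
The paper does not prove this statement. Conjecture~\ref{GLConj} is stated as an open problem; the paper's contribution is Theorem~\ref{newMainThm}, which gives the weaker bound $\as(f)\le\sqrt{n}(\log n)^{O(d\log d)}2^{O(d^2\log d)}$ --- the correct exponent of $n$, but not the exact Gotsman--Linial constant. So there is no ``paper's own proof'' to compare against here.

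As for your proposal on its own merits, Step~2 contains a genuine gap. The claim that ``the expected number of sign changes should collapse to the number of zeros of the symmetrized univariate polynomial $\E_\pi[p(\gamma_\pi(\cdot))]$'' is not justified and is false in general: the functional $q\mapsto\#\{\text{real zeros of }q\}$ is not linear, so one cannot exchange it with the expectation over $\pi$. Your own example (a chain with three sign changes for a degree-$2$ polynomial) already shows that the random variable $N_{bi}(\chi)$ is not pointwise bounded by $d$; knowing that the \emph{averaged} polynomial has at most $d$ zeros tells you nothing about the \emph{average} of the individual zero counts. The follow-up suggestion --- tracking monomial contributions via a generating function and hoping the degree bound ``aggregates cleanly'' --- does not explain how a nonlinear count becomes linear under averaging, and is too vague to constitute a proof sketch.

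More to the point, your Step~1 and Step~3 are elementary, so the inequality $\E_\chi[N_{bi}(\chi)]\le d$ in Step~2 is by itself equivalent in strength to the full Gotsman--Linial conjecture. It is not a technical lemma awaiting a routine verification; it \emph{is} the open problem, repackaged. Your fallback (invariance principle plus Gaussian surface area) is essentially the route this paper and its predecessors take, and as you correctly note it only recovers the $O(d\sqrt{n})$ order of magnitude, not the sharp constant.
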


It should be noted that if Conjecture \ref{GLConj} holds, then the stated bound would in fact be tight for $f$ defined by the product of the linear polynomials that cut through the middle $d$ layers of the hypercube.  It is also of interest to note the asymptotics of the bound given in Conjecture \ref{GLConj}.  In particular, for $n\gg d^2$ the upper bound given is $\Theta(d\sqrt{n})$.  Furthermore, by results in \cite{sens2} and \cite{DD}, Conjecture \ref{GLConj} would also imply asymptotically tight bounds for several other measures of sensitivity.

In this work, we prove a new bound on the average sensitivity of a polynomial threshold function and show in particular that for fixed degree that the exponent of $n$ given by Conjecture \ref{GLConj} is correct.

\begin{thm}\label{newMainThm}
Let $f$ be a degree-$d$ polynomial threshold function in $n>1$ variables, then
$$
\as(f) \leq \sqrt{n}(\log(n))^{O(d\log(d))}2^{O(d^2\log(d))}.
$$
\end{thm}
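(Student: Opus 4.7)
The plan is to bound $\as(f)$ by means of a noise sensitivity estimate on $f$ and then reduce that estimate to a Gaussian calculation. Concretely, I would aim to prove that for every degree-$d$ PTF $f=\sgn(p)$ and every $\epsilon\in(0,1/2)$,
$$
\ns_\epsilon(f)\;\leq\; \sqrt{\epsilon}\cdot (\log(1/\epsilon))^{O(d\log d)}\, 2^{O(d^2\log d)},
$$
and then invoke the standard Fourier-analytic relation $\as(f)=\Theta(n\cdot \ns_{1/n}(f))$ (whose lower half follows from the $\Pr[|S|=1]\approx 1/e$ term in a $\mathrm{Bin}(n,1/n)$ perturbation, modulo a negligible contribution from Fourier levels above $n/2$ that is handled separately for PTFs) to derive Theorem~\ref{newMainThm} by taking $\epsilon=1/n$. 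The crucial feature is the $\sqrt{\epsilon}$ rate, since this is exactly what collapses the exponent of $n$ in the final bound down to $1/2$.

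To estimate $\ns_\epsilon(f)$ I would transfer the problem to the Gaussian setting, where a Brownian-motion argument yields the clean bound $\gns_\epsilon(\sgn(p(G)))\leq O(\sqrt{\epsilon})\cdot \Gamma(p)$ in terms of the Gaussian surface area $\Gamma$. The transfer is enabled by a regularity decomposition: view $f$ as a decision tree of bounded depth over its highest-influence coordinates whose leaves are PTFs that are $\tau$-regular on the remaining variables, with $\tau$ chosen small enough that the Mossel--O'Donnell--Oleszkiewicz invariance principle equates Boolean and Gaussian noise sensitivities up to additive error $o(\sqrt{\epsilon})$. Because each high-influence restriction strips a controlled share of the $\ell_2$-mass of any degree-$d$ polynomial, the tree depth can be kept polynomial in $d$ and $\log(1/\epsilon)$, and averaging over leaves adds only a comparable multiplicative overhead.

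The heart of the argument is then a bound on the Gaussian surface area of a $\tau$-regular degree-$d$ PTF of the claimed form. I would attack this through iterated Hermite decomposition: write $p=\sum_{k\leq d}p_k$ with $p_k$ the degree-$k$ Hermite component and $\sigma_k=\|p_k\|_2$, and split into cases according to the variance profile. In the generic case, where no single low-dimensional Hermite subspace carries most of the $L^2$-mass, a gradient-aware anti-concentration $\Pr[|p(G)|\leq t\|\nabla p(G)\|]\lesssim t$ delivers the surface-area bound directly. In the degenerate case one conditions on the span of the dominant Hermite directions and recurses on the restricted polynomial on the complementary slice, which has strictly smaller effective degree; each round of conditioning plus anti-concentration costs a factor polynomial in $d$ and polylogarithmic in $1/\epsilon$, and $O(d)$ rounds suffice, producing the exponents $O(d\log d)$ and $O(d^2\log d)$.

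The main obstacle is precisely making the refined anti-concentration honest. Off-the-shelf Carbery--Wright gives only $\Pr[|p(G)|\leq \epsilon]\lesssim d\epsilon^{1/d}$, which is what has produced the earlier $n^{1-\Omega(1/d)}$-type bounds; the $\sqrt{\epsilon}$ rate required here genuinely needs the comparison against $\|\nabla p(G)\|$, i.e.\ a surface-area rather than a volume statement. Sustaining this comparison through the recursion, while tracking how Hermite variances, gradient norms, and regularity parameters transform under each successive conditioning and keeping the per-step loss multiplicative in $d$ rather than exponential, is where I expect the bulk of the technical difficulty to reside.
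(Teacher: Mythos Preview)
The proposal has a genuine gap at the invariance step. You want to choose $\tau$ small enough that the MOO invariance principle equates Boolean and Gaussian noise sensitivities up to additive error $o(\sqrt{\epsilon})$, and simultaneously claim that the regularity tree depth stays polynomial in $d$ and $\log(1/\epsilon)$. These two requirements are incompatible: the invariance error is $\Theta(d\tau^{1/(8d)})$, so forcing it below $\sqrt{\epsilon}=n^{-1/2}$ requires $\tau \le n^{-\Omega(d)}$, and then the regularity lemma's depth is at least $\tau^{-1}(d\log\tau^{-1})^{O(d)}\ge n^{\Omega(d)}$, which already swamps the target bound of $\sqrt{n}\cdot\polylog(n)$. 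This tension is precisely what separates the present theorem from the earlier $n^{1-1/O(d)}$ results; it cannot be bypassed by a single global application of regularity\,$+$\,invariance. (Note also that the Gaussian surface area of a degree-$d$ PTF is already known to be $O(d)$, so the Hermite recursion you sketch accomplishes nothing further on the Gaussian side; the entire difficulty lives in the Boolean-to-Gaussian transfer.)

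The paper's proof avoids this tension by never attempting a single global transfer. Instead it bounds $\as(f)$ directly via a recursion on $n$: partition the coordinates into $b\approx n^{1/(16d)}$ blocks and write $\as(f)$ as the sum over blocks of the expected sensitivity of the restriction. Progress is tracked by the functional $\alpha(p)=\E\bigl[\min\bigl(1,|D_B p(A)|^2/|p(A)|^2\bigr)\bigr]$, and the core estimate is that the expected $\alpha$ of a random restriction to a block is $O(d^3\alpha(p)b^{-1/2})$ plus an invariance error $O(d^4\tau^{1/(8d)})$. Crucially, this error is compared against $\alpha(p)\ge (K\log n)^{-d}$ (below which a separate elementary argument applies) rather than against $n^{-1/2}$, so a moderate $\tau=n^{-1/3}$ suffices and the regularity depth stays $O(\sqrt{n})$. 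The gradient-aware anticoncentration you correctly identify as essential enters here, in the refined form $\pr(|p(X)|\le\epsilon|D_Y p(X)|)=O(d^3\beta(p)\epsilon)$, to control how $\alpha$ decays across the block decomposition; the invariance principle is applied only to this comparison of $|p|$ and $|D_B p|$, once per recursive step, with a modest regularity requirement each time.
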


Again by reductions from \cite{sens2} and \cite{DD}, this would also imply new bounds on the noise sensitivity and Gaussian average sensitivity of polynomial threshold functions.  Namely,

\begin{cor}
For $f$ a degree-$d$ polynomial threshold function in $n>1$ variables, and for $1/2>\delta>0$, then
$$
\ns_\delta(f) = \sqrt{\delta}(\log(\delta^{-1}))^{O(d\log(d))}2^{O(d^2\log(d))},
$$
and
$$
\gas(f) = \sqrt{n}(\log(n))^{O(d\log(d))}2^{O(d^2\log(d))}.
$$
\end{cor}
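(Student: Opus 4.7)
Both bounds will follow from Theorem \ref{newMainThm} by invoking, as black boxes, the reductions established in \cite{sens2} and \cite{DD}. The plan is: state each reduction, plug in the new average-sensitivity bound from Theorem \ref{newMainThm}, and verify that the arithmetic collapses to the claimed form.

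For the noise-sensitivity bound I would invoke the reduction of \cite{sens2}, which converts any uniform upper bound $A_d(m)$ on the average sensitivity of degree-$d$ PTFs in $m$ variables into a bound of the shape $\ns_\delta(f)\leq O(\delta)\cdot A_d(\lceil 1/\delta\rceil)$. Morally this is a random-restriction argument: a $\delta$-noisy pair can be realized by first choosing a random set $T$ of roughly $1/\delta$ coordinates to resample, so the disagreement probability is controlled by the average sensitivity of the random restriction $f|_T$, itself a degree-$d$ PTF in $|T|$ variables. Substituting $A_d(m)\leq\sqrt{m}(\log m)^{O(d\log d)}2^{O(d^2\log d)}$ with $m=\lceil 1/\delta\rceil$ then gives
\[
\ns_\delta(f)\leq O(\delta)\cdot\sqrt{1/\delta}\,(\log(\delta^{-1}))^{O(d\log d)}2^{O(d^2\log d)}=\sqrt{\delta}\,(\log(\delta^{-1}))^{O(d\log d)}2^{O(d^2\log d)}.
\]

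For the Gaussian average-sensitivity bound I would invoke the reduction of \cite{DD}, which realizes a Gaussian-input degree-$d$ PTF $f$ on $n$ variables as a limit of Boolean-input degree-$d$ PTFs $\tilde{f}$ on $nm$ variables obtained by replacing each Gaussian coordinate with a normalized sum $m^{-1/2}\sum_{j=1}^{m}y_{i,j}$ of independent $\pm1$ variables. By the invariance principle, $\gas(f)\leq(2\sqrt{m})^{-1}\as(\tilde{f})$ up to an error that vanishes as $m\to\infty$; choosing $m=\poly(n)$ large enough to kill that error and applying Theorem \ref{newMainThm} to $\tilde{f}$ (a degree-$d$ PTF in $nm$ variables) yields
\[
\gas(f)\leq (2\sqrt{m})^{-1}\sqrt{nm}\,(\log(nm))^{O(d\log d)}2^{O(d^2\log d)}=\sqrt{n}\,(\log n)^{O(d\log d)}2^{O(d^2\log d)},
\]
using that $\log(nm)=O(\log n)$ since $m$ is only polynomial in $n$.

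The conceptual content is entirely in Theorem \ref{newMainThm}; the only potential obstacle here is verifying that the polylogarithmic and $2^{O(d^2\log d)}$ factors from Theorem \ref{newMainThm} pass unchanged through the two reductions. This is automatic, because both reductions preserve the degree $d$ exactly and alter the variable count by at most a polynomial factor, leaving the shape $\sqrt{(\cdot)}\,(\log(\cdot))^{O(d\log d)}2^{O(d^2\log d)}$ invariant under the substitutions $(\cdot)\mapsto 1/\delta$ and $(\cdot)\mapsto n$ respectively.
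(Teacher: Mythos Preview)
Your approach is exactly the paper's: the paper gives no separate proof of this corollary, merely asserting that it follows from Theorem~\ref{newMainThm} via the reductions in \cite{sens2} and \cite{DD}. Your proposal just spells out those black-box reductions in more detail than the paper does, and the arithmetic you perform to collapse the bounds to the stated form is correct.
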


\subsection{Previous Work}

Proving the conjectured bounds for the various notions of sensitivity has proved to be quite difficult.  The degree-$1$ case of Conjecture \ref{GLConj} was known to Gotsman and Linial.  The first non-trivial bounds for higher degrees were obtained independently by \cite{sens2} and \cite{sense3}, who later combined their papers into \cite{sensitivity}.  They essentially proved bounds on average sensitivities of $O_d(n^{1-1/O(d)})$ and bounds on noise sensitivities of $O_d(\delta^{1/O(d)})$.  For the special case of Gaussian noise sensitivity, the author proved essentially optimal bounds in \cite{GNSBound} of $O(d\sqrt{\delta})$.  More recently, in \cite{DD}, the author managed to use this result to get an improved estimate for the Bernoulli case giving a bound on average sensitivity of $O_{c,d}(n^{5/6+c})$ for any $c>0$, for the first time obtaining an exponent of $n$ bounded away from $1$ even as $d$ goes to infinity.  In this work, we improve this bound further, yielding the correct exponent.

\subsection{Overview of our Technique}

We begin with a very high level overview of our technique.  A somewhat more detailed overview can be found below in Section \ref{weakBoundSec}.  Very roughly, our bound is obtained via a recursive bound in terms of $n$.  We begin by splitting our coordinates into  $b$ roughly equally sized blocks (for $b=n^{1/\Theta(d)}$).  The average sensitivity is then the sum over blocks of the expected average sensitivity of a random restriction of the function to a block.  Our bound will follow from the claim that on average all but $\tilde O(\sqrt{b})$ of these blocks correspond to polynomials with standard deviations much smaller then their means, and thus have constant sign with high probability.  This result is obtained by considering the relative sizes of $p$ and its derivative at random points.  Using the idea of strong anticoncentration from \cite{GPRG} (see Lemma \ref{stronganticoncentrationLem} below), we know that on Gaussian inputs that $p$ is likely not much smaller than its derivative.  We bring this result into the Bernoulli setting by way of an invariance principle and regularity lemma, completing the proof.

This paper is organized as follows.  In Section \ref{bacgroundSec}, we provide some notation and basic results.  In Section \ref{overviewSec}, we provide a more detailed version of the above, providing a sketch of a proof of the weaker bound $\as(f) \leq \sqrt{n}\exp\left(O(d\log\log(n))^2 \right).$  We then discuss the modifications necessary to obtain our stronger bound, and introduce some additional tools.  Finally in Section \ref{MainSec}, we prove Theorem \ref{newMainThm}.

\section{Background and Notation}\label{bacgroundSec}

\subsection{Notation}

Throughout we will use $X,Y,Z$ to represent standard multidimensional Gaussian random variables and $A,B,C$ to represented standard multidimensional Bernoulli variables unless otherwise specified.  For a function $f:\R^n\rightarrow \R$, and a vector $v\in \R^n$, we let $D_v f(x)$ be the directional derivative of $f$ at $x$ in the direction of $v$, or equivalently, $D_v f(x) = v \cdot \nabla f(x)$.  For completeness, we formally state the definition of a polynomial threshold function:

\begin{defn}
A function $f:\R^n\rightarrow\R$ is a (degree-$d$) polynomial threshold function if it is of the form
$$
f(x) = \sgn(p(x))
$$
for some (degree-$d$) polynomial $p:\R^n\rightarrow\R$.
\end{defn}

\subsection{Polynomials with Random Inputs}

Here we review some of the basic distributional results about polynomials evaluated at random Gaussian or Bernoulli inputs.  To begin with we define the standard $L^t$ norms:

\begin{defn}
If $f:\R^n\rightarrow\R$ is a function and $t\geq 1$ is a real number we let
$$
|f|_t = \left( \E[|f(X)|^t] \right)^{1/t}, \ \ \  |f|_{B,t} = \left( \E[|f(A)|^t] \right)^{1/t}.
$$
Recall that above $X$ is a standard $n$-dimensional Gaussian and $A$ a standard $n$-dimensional Bernoulli random variable.
\end{defn}

The following Lemma relating the $L^2$ norms will prove to be important:
\begin{lem}\label{L2EqLem}
If $p$ is a multilinear polynomial then
$$
|p|_2 = |p|_{B,2}
$$
\end{lem}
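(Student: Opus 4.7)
The plan is to expand $p$ in the multilinear monomial basis and show that both expectations reduce to the sum of squared coefficients. Write $p(x) = \sum_{S \subseteq [n]} c_S \chi_S(x)$, where $\chi_S(x) = \prod_{i \in S} x_i$. Then $p(x)^2 = \sum_{S,T} c_S c_T \chi_S(x)\chi_T(x)$, and since we are free to use the identity $\chi_S \chi_T = \chi_{S \triangle T} \cdot \prod_{i \in S \cap T} x_i^2$, computing $|p|_2^2$ and $|p|_{B,2}^2$ both come down to evaluating $\mathbb{E}[\chi_S(X)\chi_T(X)]$ and $\mathbb{E}[\chi_S(A)\chi_T(A)]$.

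For the Bernoulli case, this is immediate: each coordinate $A_i \in \{\pm 1\}$ satisfies $A_i^2 = 1$ identically, so $\chi_S(A)\chi_T(A) = \chi_{S \triangle T}(A)$, and by independence of coordinates together with $\mathbb{E}[A_i] = 0$ we get $\mathbb{E}[\chi_{S \triangle T}(A)] = \mathbf{1}[S = T]$. For the Gaussian case, I would use independence of the $X_i$ to factor $\mathbb{E}[\chi_{S \triangle T}(X) \prod_{i \in S \cap T} X_i^2]$ as a product over coordinates. Coordinates in $S \triangle T$ contribute $\mathbb{E}[X_i] = 0$ unless $S \triangle T = \emptyset$, and coordinates in $S \cap T$ contribute $\mathbb{E}[X_i^2] = 1$. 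So again the expectation equals $\mathbf{1}[S = T]$.

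Combining, in either model $\mathbb{E}[p^2] = \sum_S c_S^2$, which proves the lemma. There is no real obstacle here; the only subtlety is remembering that we need $p$ to be \emph{multilinear} so that the cross terms between distinct $S, T$ live in the orthogonal complement of the constants in both inner products simultaneously. For non-multilinear $p$, $X_i^2$ has mean $1$ but $A_i^2 = 1$ deterministically, and higher moments diverge, so the identity fails in general. The multilinearity hypothesis is exactly what makes the two inner product structures agree on the relevant basis.
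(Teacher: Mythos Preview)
Your proof is correct and is essentially the same argument as the paper's: both observe that the multilinear monomials $\chi_S(x)=\prod_{i\in S}x_i$ are orthonormal under both the Gaussian and Bernoulli inner products, so $|p|_2^2=|p|_{B,2}^2=\sum_S c_S^2$. The paper simply asserts the orthonormality, whereas you spell out the coordinate-wise computation verifying it.
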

\begin{proof}
This follows immediately upon noting that the polynomials of the form $\prod_{i\in S} x_i$ for subsets $S\subseteq\{1,\ldots,n\}$ form an orthonormal basis for the set of multilinear polynomials with respect to both the inner product defined by the Gaussian measure and the inner product defined by the Bernoulli measure.
\end{proof}

One of the most important results on the distribution of the values of polynomials is the hypercontractivity result which relates the values of higher moments to the second moment.  In particular, the following follows from results of \cite{BHyp} and \cite{hypercontractivity} :

\begin{lem}\label{hypLem}
Let $p$ be a polynomial of degree-$d$ and $t\geq 2$ a real number.  Then
$$
|p|_{t} \leq \sqrt{t-1}^d |p|_{2}, \ \ \ |p|_{B,t} \leq \sqrt{t-1}^d |p|_{B,2}.
$$
\end{lem}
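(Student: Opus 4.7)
The plan is to derive the bound from the standard hypercontractive inequality for an appropriate noise semigroup, which is precisely what the cited references \cite{BHyp} and \cite{hypercontractivity} supply; the Gaussian and Bernoulli cases are formally identical once the right noise operator is introduced.

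First I would introduce the noise operator $T_\rho$. In the Bernoulli setting, expand $p$ in the Fourier-Walsh basis $p = \sum_S \hat p(S) \chi_S$ and set $T_\rho p = \sum_S \rho^{|S|} \hat p(S) \chi_S$. In the Gaussian setting, expand $p$ in Hermite polynomials and set $T_\rho$ to be the Ornstein-Uhlenbeck semigroup, which multiplies the Hermite basis element $H_\alpha$ by $\rho^{|\alpha|}$. In both settings the structural facts I need are: (i) $T_\rho$ scales the degree-$k$ orthogonal component $p_k$ of $p$ by $\rho^k$; and (ii) distinct-degree components are $L^2$-orthogonal, so $|p|_2^2 = \sum_k |p_k|_2^2$ (and likewise for $|\cdot|_{B,2}$).

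The input provided by the cited papers is the hypercontractive estimate
$$
|T_\rho g|_{t} \leq |g|_{2} \qquad \text{whenever } \rho \leq 1/\sqrt{t-1},
$$
together with its Bernoulli analog. Granting this, the lemma follows by a short inversion trick. Set $\rho = 1/\sqrt{t-1}$ and, since $p$ has degree at most $d$, define $g = \sum_{k=0}^d \rho^{-k} p_k$, which satisfies $T_\rho g = p$. Orthogonality gives $|g|_{2}^2 = \sum_{k=0}^d \rho^{-2k} |p_k|_2^2 \leq \rho^{-2d} |p|_2^2$, so applying hypercontractivity yields
$$
|p|_{t} = |T_\rho g|_{t} \leq |g|_{2} \leq \rho^{-d} |p|_{2} = (\sqrt{t-1})^d \, |p|_{2}.
$$
The Bernoulli inequality is obtained identically, using the Bonami-Beckner inequality in place of the Gaussian version.

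The only substantive ingredient is the hypercontractive inequality itself, which is nontrivial but supplied directly by the cited works; there is no further obstacle beyond identifying the critical value of $\rho$ that balances the two exponents.
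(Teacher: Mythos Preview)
Your proposal is correct and matches the paper's approach: the paper gives no proof for this lemma, simply stating that it ``follows from results of \cite{BHyp} and \cite{hypercontractivity},'' and what you have written is exactly the standard derivation of the moment bound from those hypercontractive inequalities via the inversion $g=T_{1/\rho}p$ on the degree-$\le d$ part. One small remark: in the Bernoulli case you are implicitly replacing $p$ by its multilinear representative on $\{-1,1\}^n$ before taking the Fourier--Walsh expansion, which is harmless since $|\cdot|_{B,t}$ only sees values on the hypercube.
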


These bounds on higher moments allow us to prove concentration bounds on the distribution of our polynomial.  In particular, we have the following corollary (see \cite{DD}):

\begin{cor}\label{concentrationCor}
For $p:\R^n\rightarrow \R$ a degree-$d$ polynomial $N>0$, then
$$
\pr(|p(X)| > N|p|_{2}) = O\left(2^{-(N/2)^{2/d}} \right), \ \ \ \pr(|p(A)| > N|p|_{B,2}) = O\left(2^{-(N/2)^{2/d}} \right).
$$
\end{cor}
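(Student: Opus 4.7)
The plan is to combine Markov's inequality applied to a high moment $|p|^t$ with the hypercontractive estimate of Lemma \ref{hypLem}. For any real $t \geq 2$, Markov's inequality applied to $|p(X)|^t$ gives
$$
\pr(|p(X)| > N|p|_2) \leq \frac{\E[|p(X)|^t]}{N^t |p|_2^t} = \left(\frac{|p|_t}{N|p|_2}\right)^t,
$$
and the first half of Lemma \ref{hypLem} bounds the right-hand side above by $\left((t-1)^{d/2}/N\right)^t$.

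It remains to optimize in $t$. The natural choice is $t = 1 + (N/2)^{2/d}$, so that $(t-1)^{d/2} = N/2$ and the bound becomes
$$
(1/2)^t = 2^{-1-(N/2)^{2/d}} = O\!\left(2^{-(N/2)^{2/d}}\right).
$$
This choice satisfies the hypothesis $t \geq 2$ of Lemma \ref{hypLem} precisely when $N \geq 2$. For $N < 2$ the claimed bound is already $\Omega(1)$, so the trivial estimate $\pr(\cdot) \leq 1$ suffices after absorbing the constant into the implied $O(\cdot)$.

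The Bernoulli assertion is proved in exactly the same way, replacing $X$ by $A$ and $|p|_2$ by $|p|_{B,2}$, and invoking the second inequality of Lemma \ref{hypLem} in place of the first. There is no serious obstacle here; the only mild care lies in tuning $t$ so the hypercontractive exponent matches the target tail and in disposing of the small-$N$ regime by absorbing it into the implied constant.
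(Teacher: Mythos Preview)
Your argument is correct and is exactly the approach the paper has in mind: the paper does not spell out a proof here but simply cites \cite{DD}, prefacing the corollary with the remark that ``these bounds on higher moments allow us to prove concentration bounds,'' which is precisely the Markov-plus-hypercontractivity optimization you carry out. Nothing further is needed.
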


In addition to this concentration result, we will also need some anticoncentration results (i.e. results that tell us that the value of $p$ does not lie in a small interval with too large a probability).  For starters, applying the Paley-Zygmund inequality (see \cite{PZ}) to $p^2$, we obtain the following result, which we call ``weak anticoncentration'':

\begin{cor}[Weak Anticoncentration]\label{WeakAnticoncentrationCor}
Let $p$ be a degree-$d$ polynomial in $n$ variables.  Then
$$
\pr\left(|p(X)|\geq |p|_2/2\right) \geq 9^{-d}/2, \ \ \ \pr\left(|p(A)|\geq |p|_{B,2}/2\right) \geq 9^{-d}/2.
$$
\end{cor}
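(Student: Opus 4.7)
The plan is to follow the hint and apply the Paley-Zygmund inequality to the nonnegative random variable $Z = p(X)^2$ (and similarly $Z = p(A)^2$ in the Bernoulli case). Recall that Paley-Zygmund states that for a nonnegative $Z$ with finite second moment and $\theta \in [0,1]$,
$$
\pr(Z \geq \theta \E[Z]) \geq (1-\theta)^2 \frac{\E[Z]^2}{\E[Z^2]}.
$$
For our $Z$, we have $\E[Z] = |p|_2^2$ and $\E[Z^2] = |p|_4^4$, so choosing $\theta = 1/4$ converts the event $Z \geq \theta \E[Z]$ into exactly $|p(X)| \geq |p|_2/2$, which is what we want.

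The key input is then an upper bound on $\E[Z^2]/\E[Z]^2 = |p|_4^4/|p|_2^4$. This is precisely what hypercontractivity (Lemma \ref{hypLem}) with $t=4$ provides: it gives $|p|_4 \leq \sqrt{3}^d |p|_2$, i.e., $|p|_4^4 \leq 9^d |p|_2^4$. Consequently $\E[Z]^2/\E[Z^2] \geq 9^{-d}$, and Paley-Zygmund yields
$$
\pr(|p(X)| \geq |p|_2/2) \geq (3/4)^2 \cdot 9^{-d} = \tfrac{9}{16} \cdot 9^{-d} \geq 9^{-d}/2.
$$

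The Bernoulli statement is proved identically: Lemma \ref{hypLem} also provides the bound $|p|_{B,4} \leq 3^{d/2}|p|_{B,2}$, and the same application of Paley-Zygmund to $p(A)^2$ gives the claimed inequality. There is no real obstacle here --- everything reduces to the two-line combination of Paley-Zygmund with the fourth-moment version of hypercontractivity, and the only tiny arithmetic point is that the constant $9/16$ coming from $(1-\theta)^2$ with $\theta=1/4$ is comfortably larger than the $1/2$ in the stated bound.
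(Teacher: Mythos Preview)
Your proof is correct and is exactly the approach the paper intends: the paper merely states that the corollary follows by applying the Paley--Zygmund inequality to $p^2$, and you have filled in precisely those details, using Lemma~\ref{hypLem} with $t=4$ to bound $|p|_4^4/|p|_2^4 \leq 9^d$ and choosing $\theta = 1/4$ to recover the event $|p(X)| \geq |p|_2/2$.
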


While the bounds in Corollary \ref{WeakAnticoncentrationCor} are fairly weak, not much more can be said in the Bernoulli case.  In particular, it is not hard to demonstrate non-zero, degree-$d$ polynomials $p$ so that $p(A)=0$ with probability $1-2^{-d}$.  On the other hand, in the Gaussian case it can be shown that the output of $p$ is bounded away from zero with large probability.  In particular, we have the following result of Carbery and Wright (\cite{anticoncentration}):

\begin{lem}[Carbery and Wright]\label{anticoncentrationLem}
If $p$ is a degree-$d$ polynomial and $\epsilon>0$ then
$$
\pr(|p(X)| \leq \epsilon|p|_2) = O(d\epsilon^{1/d}).
$$
\end{lem}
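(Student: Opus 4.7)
The plan is to reduce the small-ball estimate to a negative-moment bound and then prove that bound by induction on the dimension. After normalizing $|p|_2 = 1$, Markov's inequality gives
$$\pr(|p(X)| \leq \epsilon) = \pr(|p(X)|^{-s} \geq \epsilon^{-s}) \leq \epsilon^s\, \E[|p(X)|^{-s}]$$
for any $s > 0$, so it suffices to show $\E[|p(X)|^{-s}] = O(d)$ for $s = 1/d$. The factor of $d$ in the conclusion will be absorbed into this moment bound.

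For the negative-moment estimate I would induct on $n$. In the base case $n=1$, a univariate degree-$d$ polynomial $q$ with $|q|_2 = 1$ has at most $d$ real roots (with multiplicity), so $\{|q|\leq \epsilon\}$ is a union of $O(d)$ intervals. Near a root of multiplicity $m\leq d$, $|q|$ behaves like $|t - \text{root}|^m$ times the relevant local coefficient, and the contribution to $\int|q|^{-s}\,d\gamma$ is integrable as long as $s < 1/d$. A quantitative lower bound on the relevant coefficient in terms of $|q|_2 = 1$ comes from expanding $q$ in the Hermite basis and using that some Hermite coefficient must be $\Omega(1)$. For the inductive step, split $X = (X_1, X')$ and expand $p(x_1,x') = \sum_{k=0}^d a_k(x')\,x_1^k$. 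Conditional on $X'$, the one-dimensional estimate gives an anticoncentration bound proportional to $|p(\cdot, X')|_2^{-s}$. Averaging over $X'$, the problem reduces to bounding $\E_{X'}[|p(\cdot, X')|_2^{-s}]$, and since $|p(\cdot, X')|_2^2 = \sum_k k!\, a_k(X')^2$ is itself a polynomial in $X'$ of degree $\leq 2d$ with squared $L^2$-norm equal to $|p|_2^2 = 1$, the inductive hypothesis can be applied (with an appropriate adjustment of $d$ and $s$).

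The main obstacle is preventing the exponent $s$ from collapsing through the induction, since each slicing step doubles the degree of the auxiliary polynomial that we need to anticoncentrate. The cleanest workaround is to slice one coordinate at a time and at each step invoke only the univariate base case, so that the relevant exponent stays fixed at $1/d$. The remaining issue becomes showing that the product of the resulting per-coordinate factors has bounded expectation, which should follow from the hypercontractivity of Lemma \ref{hypLem} applied to the intermediate coefficient polynomials. A more subtle point is the ``effective leading coefficient'' in the one-dimensional case: when the nominal top coefficient of $q$ is tiny the argument must pivot on whichever lower Hermite coefficient is $\Omega(1)$ and re-run with the appropriately reduced degree, which is where the final linear-in-$d$ dependence of the prefactor comes from.
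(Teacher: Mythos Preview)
The paper does not prove this lemma at all; it is simply quoted from Carbery and Wright \cite{anticoncentration}. There is no argument in the paper to compare your attempt against.

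Regarding the attempt itself: the obstacle you flag is real, and your proposed fix does not close it. After one slice you must control $\E_{X'}\bigl[\|p(\cdot,X')\|_2^{-1/d}\bigr]$. Lemma~\ref{hypLem} (hypercontractivity) bounds \emph{positive} moments of a polynomial by its $L^2$ norm; it gives no control on negative moments, and in fact a bound of the form $\E[|q|^{-s}]<\infty$ is essentially equivalent, via the layer-cake formula, to the small-ball estimate $\pr(|q|\le\epsilon)=O(\epsilon^{s})$ that you are trying to prove in the first place. So the appeal to hypercontractivity at that step is circular. Moreover, if you try to iterate the slicing, the auxiliary polynomial $\sum_k k!\,a_k(x')^2$ has degree $2d$, so the degree to which you must apply the inductive hypothesis doubles at every step and the constants blow up with the dimension; your sentence ``so that the relevant exponent stays fixed at $1/d$'' is exactly the point that is not justified. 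The original Carbery--Wright argument proceeds quite differently: it inducts on the \emph{degree} $d$ rather than the dimension, comparing the sublevel sets of $p$ with those of a suitable directional derivative, and the key input is log-concavity of the underlying measure rather than any hypercontractive estimate.
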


Perhaps more importantly though for our purposes the idea of strong anticoncentration, introduced in \cite{GPRG}, which relates the size of a polynomial to its derivative. In particular we will need:

\begin{lem}[Strong Anticoncentration]\label{stronganticoncentrationLem}
Let $p$ be a non-zero degree-$d$ polynomial and $\epsilon>0$, then
$$
\pr(|p(X)| \leq \epsilon |D_Y p(X)|) = O(d^2 \epsilon).
$$
\end{lem}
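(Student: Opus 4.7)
The plan is to exploit rotational invariance in the joint $(X,Y)$ space to reduce the bound to a deterministic claim about a trigonometric polynomial of one variable, and then bound the bad set via the roots of an associated algebraic polynomial.

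First I would set $X_\alpha = X\cos\alpha + Y\sin\alpha$ and $Y_\alpha = -X\sin\alpha + Y\cos\alpha$ and observe that for every $\alpha$, the pair $(X_\alpha,Y_\alpha)$ has the same joint distribution as $(X,Y)$ by orthogonal invariance of the $2n$-dimensional Gaussian. Consequently
$$
\pr(|p(X)| \leq \epsilon|D_Y p(X)|) = \E_{X,Y}\!\left[\frac{1}{2\pi}\int_0^{2\pi}\mathbf{1}[|p(X_\alpha)| \leq \epsilon|D_{Y_\alpha} p(X_\alpha)|]\,d\alpha\right].
$$
Setting $h(\alpha) = p(X_\alpha)$, we have $h'(\alpha) = \nabla p(X_\alpha)\cdot Y_\alpha = D_{Y_\alpha} p(X_\alpha)$, so the event inside the indicator is exactly $|h(\alpha)| \leq \epsilon|h'(\alpha)|$. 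Moreover, since $X_\alpha$ depends linearly on $\cos\alpha,\sin\alpha$, the function $h$ is a trigonometric polynomial in $\alpha$ of degree at most $d$, and it is non-zero almost surely (if $p \not\equiv 0$, then $p$ vanishing on the 2-plane spanned by $X,Y$ is a measure-zero event). Thus the lemma reduces to the following one-variable claim: for any non-zero trigonometric polynomial $h$ of degree $\leq d$, the measure of $\{\alpha\in[0,2\pi]:|h(\alpha)|\leq\epsilon|h'(\alpha)|\}$ is $O(d^2\epsilon)$.

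To prove this claim, I would write $h(\alpha) = e^{-id\alpha}P(e^{i\alpha})$ where $P$ is a polynomial of degree at most $2d$, factor $P(z) = c\prod_{j=1}^{2d}(z-z_j)$ over $\mathbb{C}$, and compute the logarithmic derivative. A direct calculation gives
$$
\frac{h'(\alpha)}{h(\alpha)} = -id + \sum_{j=1}^{2d}\frac{ie^{i\alpha}}{e^{i\alpha}-z_j},
$$
and since $h$ is real the left side equals its own real part. The triangle inequality then yields $|h'(\alpha)/h(\alpha)| \leq \sum_j 1/|e^{i\alpha}-z_j|$. Consequently, if $|h'/h|(\alpha)\geq 1/\epsilon$, some factor $|e^{i\alpha}-z_j|$ must be at most $2d\epsilon$, so the bad set is contained in $\bigcup_{j=1}^{2d}\{\alpha:|e^{i\alpha}-z_j|\leq 2d\epsilon\}$.

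The final step is a geometric estimate: for any $z_j\in\mathbb{C}$, the arc $\{\alpha:|e^{i\alpha}-z_j|\leq r\}$ has Lebesgue measure $O(r)$. Writing $z_j=Re^{i\theta_j}$ and expanding $|e^{i\alpha}-z_j|^2 = 1 + R^2 - 2R\cos(\alpha-\theta_j)$, the condition becomes $\cos(\alpha-\theta_j)\geq 1 - (r^2-(R-1)^2)/(2R)$, which forces $|\alpha-\theta_j|\lesssim r$ (and the set is empty unless $|R-1|\leq r$). Taking $r=2d\epsilon$ and summing over the at most $2d$ roots of $P$ gives total measure $O(d^2\epsilon)$, as required. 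The main subtlety in executing this plan is verifying that the reduction to a trigonometric polynomial really preserves the degree correctly and that the rotational-invariance step can be applied cleanly despite the possibility (of measure zero) that $h$ is identically zero; the algebraic estimates are routine once the right factorization is in place.
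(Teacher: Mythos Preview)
Your proposal is correct and follows essentially the same route as the paper: rotational invariance in $(X,Y)$ to reduce to a one-variable trigonometric polynomial $h(\alpha)=p(X_\alpha)$, the factorization $h(\alpha)=e^{-id\alpha}P(e^{i\alpha})$ with $\deg P\le 2d$, and a union bound over the roots of $P$ via the arc-length estimate $|\{\alpha:|e^{i\alpha}-z_j|\le r\}|=O(r)$. The only cosmetic difference is that you exploit the reality of $h'/h$ to drop the $-id$ term before applying the triangle inequality, whereas the paper keeps it and handles the case $\epsilon>1/(2d)$ separately; both lead to the same $O(d^2\epsilon)$ bound.
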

\begin{proof}
For real number $\theta$ let
\begin{align*}
X_\theta & = \ \ \ \cos(\theta)X + \sin(\theta) Y,\\
Y_\theta & = -\sin(\theta)X + \cos(\theta)Y.
\end{align*}
We note for any $\theta$ that $X_\theta$ and $Y_\theta$ are independent standard Gaussians.  Taking $\theta$ to be uniformly distributed over $[0,2\pi]$, we have that
\begin{align*}
\pr(|p(X)| \leq \epsilon |D_Y p(X)|) & = \pr(|p(X_\theta)| \leq \epsilon |D_{Y_\theta} p(X_{\theta})|)\\
& = \pr\left(|p(X_\theta)| \leq \epsilon \left|\frac{\partial}{\partial \theta} (p(X_{\theta}))\right|\right)\\
& = \E_{X,Y}\left[\pr_\theta\left(|p(X_\theta)| \leq \epsilon \left|\frac{\partial}{\partial \theta} (p(X_{\theta}))\right|\right)\right].
\end{align*}
We claim that for any $X,Y$ that do not leave $p(X_\theta)$ identically $0$ that the inner probability is $O(d^2\epsilon)$.  We may write $p(X_\theta)$ as a degree-$d$ polynomial in $\sin(\theta)$ and $\cos(\theta)$.  Thus we may write $p(X_\theta)$ as $e^{-id\theta}q(e^{i\theta})$ for some polynomial $q$ of degree at most $2d$.  Letting $z=e^{i\theta}$ we have that
$$
\frac{ \left|\frac{\partial}{\partial \theta} (p(X_{\theta}))\right|}{|p(X_\theta)|} = \left|-dz^{-d} + \frac{q'(z)}{q(z)} \right| \leq d + \left|\frac{q'(z)}{q(z)} \right|.
$$
Now if $\epsilon > 1/(2d)$, we have nothing to prove.  Otherwise, it suffices to bound the probability that the logarithmic derivative of $q$ at $z$ has absolute value at most $1/(2\epsilon)$.  We may factor $q$ as $q(z) = c\prod_{i=1}^{g} (z-r_i)$ where $g\leq 2d$ and $c,r_i$ are some complex numbers.  We have that
\begin{align*}
\left|\frac{q'(z)}{q(z)} \right|  & = \left| \sum_{i=1}^g \frac{1}{z-r_i} \right| \\
& \leq \frac{2d}{\min_i |z-r_i|}.
\end{align*}
Hence we have that $|p(X_\theta)| \leq \epsilon \left|\frac{\partial}{\partial \theta} (p(X_{\theta}))\right|$ only if $|z-r_i| < 4d\epsilon$ for some $i$.  By the union bound over $i$, this happens with probability at most $2dO(4d\epsilon) = O(d^2\epsilon).$  This completes our proof.
\end{proof}
\begin{rmk}
A tighter analysis will actually achieve a bound of $O(d\log(d)\epsilon)$, which is optimal.
\end{rmk}

Finally, we will need a single result on the average size of the derivative of a polynomial.  In particular the following follows from results in \cite{DD}:

\begin{lem}\label{derSizeLem}
For $p$ a degree-$d$ polynomial, then
$$
\var(p(X)) \leq \E[|D_Y p(X)|^2] = \E[|\nabla p(X)|^2] \leq d \var(p(X)).
$$
\end{lem}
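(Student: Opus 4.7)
The plan is to work in the orthonormal basis of multivariate Hermite polynomials, which diagonalizes both sides of the claimed inequality chain. The middle equality $\E[|D_Y p(X)|^2] = \E[|\nabla p(X)|^2]$ is the easiest: since $D_Y p(X) = Y \cdot \nabla p(X)$ and $Y$ is a standard Gaussian independent of $X$, conditioning on $X$ and using $\E_Y[(Y\cdot v)^2] = |v|^2$ for any fixed $v\in\R^n$ gives the identity after taking expectation in $X$.

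For the two inequalities, I would expand
$$
p(X) = \sum_{\alpha:\, |\alpha|\leq d} c_\alpha h_\alpha(X)
$$
in the orthonormal basis of normalized multivariate Hermite polynomials, where $\alpha$ ranges over multi-indices and $|\alpha|=\sum_i \alpha_i$. By orthonormality with respect to the Gaussian measure, $\var(p(X)) = \sum_{\alpha \neq 0} c_\alpha^2$. The standard derivative relation $\partial_i h_\alpha = \sqrt{\alpha_i}\,h_{\alpha-e_i}$ then yields
$$
\E[|\nabla p(X)|^2] = \sum_{i=1}^n \E[(\partial_i p(X))^2] = \sum_{\alpha} c_\alpha^2 \,|\alpha|.
$$
Since $|\alpha|\geq 1$ for every $\alpha\neq 0$ contributing to the sum, and $|\alpha|\leq d$ throughout (because $p$ has degree $d$), both inequalities follow immediately by sandwiching $\sum_{\alpha\neq 0} c_\alpha^2 \leq \sum_\alpha c_\alpha^2|\alpha| \leq d\sum_{\alpha\neq 0}c_\alpha^2$.

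There is no real obstacle here: the statement is essentially the Gaussian Poincar\'e inequality together with its (degree-dependent) reverse for polynomials, and the Hermite bookkeeping makes the constant $d$ transparent. An equivalent route would be to invoke the Ornstein--Uhlenbeck generator $L$, which satisfies $-\E[p\cdot Lp] = \E[|\nabla p|^2]$ while acting as multiplication by $|\alpha|\in [1,d]$ on each Hermite component of the mean-zero part of $p$, but the direct expansion above is cleaner and requires no additional machinery beyond what has already been introduced.
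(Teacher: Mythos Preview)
Your argument is correct. The middle equality follows exactly as you say, and the Hermite expansion cleanly yields $\E[|\nabla p(X)|^2]=\sum_\alpha c_\alpha^2\,|\alpha|$, from which both inequalities are immediate since $1\le |\alpha|\le d$ for every nonzero multi-index appearing.

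As for comparison: the paper does not actually give a proof of this lemma but simply records it as following from results in \cite{DD}. Your write-up therefore supplies a short self-contained argument where the paper only cites one. The Hermite-basis computation you use (equivalently, the Ornstein--Uhlenbeck spectral viewpoint you mention) is the standard way to see the Gaussian Poincar\'e inequality together with its degree-$d$ reverse, and it is almost certainly what underlies the cited result as well; there is no meaningful methodological difference to highlight beyond the fact that you have written out the details explicitly.
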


\subsection{Sensitivity and Influence}\label{sensDefnSec}

We now define the $i\tth$ influence of a function on the hypercube.
\begin{defn}
If $f:\{-1,1\}^n\rightarrow \R$ and $i$ is an integer between $1$ and $n$, we define
$$
\Inf_i(f) = \E_A[\var_{A_i}(f(A))].
$$
This is the average over ways of picking the values of all coordinates except for the $i\tth$ of the variance over the $i\tth$ coordinate of $f$.  Alternatively it is
$$
\frac{1}{4}\E[|f(A)-f(A^i)|^2]
$$
where $A^i$ is obtained from $A$ by negating the $i\tth$ coordinate.  Finally, if $f$ is given as a multilinear polynomial on $\R^n$ it is not hard to show that
$$
\Inf_i(f) = \left| \frac{\partial f}{\partial x_i} \right|_2^2.
$$
\end{defn}

The last definition may be combined with Lemma \ref{derSizeLem} to obtain the following Corollary:

\begin{cor}\label{totalInfCor}
If $p$ is a multilinear, degree-$d$ polynomial in $n$ variables, then
$$
\var(p(A)) \leq \sum_{i=1}^n \Inf_i(p) \leq d\var(p(A)).
$$
\end{cor}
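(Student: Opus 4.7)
The plan is to chain together the multilinear-$L^2$ equivalence (Lemma \ref{L2EqLem}), the identity $\Inf_i(p) = |\partial p/\partial x_i|_2^2$ from the definition, and the Gaussian derivative estimate (Lemma \ref{derSizeLem}), since the statement about Bernoulli variance is really a statement about $L^2$ norms that are insensitive to which product measure one uses, provided the polynomial is multilinear.

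First I would observe that since $p$ is multilinear, Lemma \ref{L2EqLem} gives $|p|_2 = |p|_{B,2}$. Because the constant term of $p$ is both $\E[p(X)]$ and $\E[p(A)]$, subtracting it off (and noting that $p - \E[p]$ is still multilinear) yields $\var(p(A)) = \var(p(X))$. Next, for each $i$ the partial derivative $\partial p/\partial x_i$ is again multilinear (it is the sum of monomials of $p$ containing $x_i$, with $x_i$ removed), so applying Lemma \ref{L2EqLem} to it gives $|\partial p/\partial x_i|_2 = |\partial p/\partial x_i|_{B,2}$. Combined with the definitional identity $\Inf_i(p) = |\partial p/\partial x_i|_{B,2}^2$, this yields
$$
\sum_{i=1}^n \Inf_i(p) = \sum_{i=1}^n \left|\frac{\partial p}{\partial x_i}\right|_2^2 = \E\!\left[\sum_{i=1}^n \left(\frac{\partial p}{\partial x_i}(X)\right)^2\right] = \E[|\nabla p(X)|^2],
$$
where the middle equality uses linearity of expectation together with the Gaussian definition of $|\cdot|_2$.

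Finally I would invoke Lemma \ref{derSizeLem}, which states exactly that
$$
\var(p(X)) \leq \E[|\nabla p(X)|^2] \leq d\,\var(p(X)).
$$
Substituting $\var(p(X)) = \var(p(A))$ on both ends produces the claimed inequalities. There is essentially no obstacle here: the only thing to watch is that the multilinearity hypothesis must be applied twice (once to $p$ itself to identify the two variances, and once to each $\partial p/\partial x_i$ to identify the two definitions of influence), and that Lemma \ref{derSizeLem} is invoked in its Gaussian form, which is legitimate only after the multilinear $L^2$-equivalence has been used to translate the Bernoulli variance over to the Gaussian side.
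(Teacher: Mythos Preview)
Your proof is correct and follows essentially the same route the paper intends: the paper simply remarks that the identity $\Inf_i(p)=|\partial p/\partial x_i|_2^2$ combined with Lemma~\ref{derSizeLem} yields the corollary, and you have merely spelled out the intermediate step of using Lemma~\ref{L2EqLem} to pass between $\var(p(A))$ and $\var(p(X))$. Note that the paper already records $\Inf_i(p)=|\partial p/\partial x_i|_2^2$ with the Gaussian norm, so your detour through $|\partial p/\partial x_i|_{B,2}$ is not strictly needed, but it does no harm.
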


An important notion is that of regularity of a polynomial, which is a measure of how much influence any one coordinate can have on the output.  We recall:

\begin{defn}
We say that a polynomial $p$ is $\tau$-regular for some $\tau>0$ if
$$
\Inf_i(p) \leq \tau \var(p(A))
$$
for all $i$.
\end{defn}

We also recall the definition of the average sensitivity (also known as the total influence) of a Boolean function.

\begin{defn}
If $f:\{-1,1\}^n\rightarrow \{-1,1\}$ then
$$
\as(f) := \sum_{i=1}^n \Inf_i(f).
$$
\end{defn}

Finally, we define some functions to keep track of the maximum possible average sensitivity of a polynomial threshold function of a given dimension, degree, and amount of regularity.

\begin{defn}
If $d,n,\tau>0$ are real numbers we let $\mas(d,n)$ be the maximum over polynomial threshold functions $f$ of degree at most $d$ and dimension at most $n$ of $\as(f)$.  We let $\mras(d,n,\tau)$ be the maximum over such functions $f$ where additionally $f(x)=\sgn(p(x))$ for $p$ a degree-$d$, $\tau$-regular polynomial of $\as(f)$.
\end{defn}

\subsection{Invariance and Regularity}

An important tool for us will be the invariance principle of \cite{MOO}, which relates the distribution of a polynomial under Gaussian input to its distribution under Bernoulli input.  In particular, we have:

\begin{thm}[The Invariance Principle (Mossel, O'{}Donnell, and Oleszkiewicz)]\label{InvarianceThm}
If $p$ is a $\tau$-regular, degree-$d$ multilinear polynomial, and $t\in \R$, then
$$
\left| \pr(p(X) \leq t) - \pr(p(A)\leq t)\right| = O(d\tau^{1/(8d)}).
$$
\end{thm}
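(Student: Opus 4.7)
The plan is to prove the bound via the Lindeberg-style hybrid argument of Mossel--O'Donnell--Oleszkiewicz. I would first smooth the indicator $\mathbf{1}_{(-\infty,t]}$ to a $C^4$ function $\psi_\lambda$ on a scale $\lambda>0$, chosen so that $\|\psi_\lambda^{(k)}\|_\infty = O(\lambda^{-k})$ for $k\le 4$, and set up the interpolating random vectors $H^{(j)} := (X_1,\ldots,X_j,A_{j+1},\ldots,A_n)$, so $H^{(0)}=A$ and $H^{(n)}=X$. The task then reduces to bounding $\bigl|\E[\psi_\lambda(p(X))] - \E[\psi_\lambda(p(A))]\bigr|$ by a telescoping sum over the $n$ single-coordinate swaps, and afterwards paying an anticoncentration price to replace $\psi_\lambda$ by the true indicator.

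The core step is the per-coordinate Taylor estimate. Since $p$ is multilinear, I write $p(y) = p_0(y_{-j}) + y_j\,\partial_j p(y_{-j})$. Taylor expanding $\psi_\lambda(p)$ in the $j$-th coordinate to fourth order and taking expectations, the zeroth through third Taylor terms agree between $X_j$ and $A_j$ because $\E[X_j^k]=\E[A_j^k]$ for $k\le 3$. Hence the per-coordinate difference is controlled by the fourth-order remainder
$$
O(\lambda^{-4})\,\E\!\left[|\partial_j p(H^{(j-1)})|^{4}\right].
$$
Applying hypercontractivity (Lemma \ref{hypLem}) to the degree-$(d-1)$ polynomial $\partial_j p$ gives $\E[|\partial_j p|^4] \le 9^{d}\,\Inf_j(p)^2$. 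Summing via $\sum_j \Inf_j(p)^2 \le (\max_j \Inf_j(p))\sum_j \Inf_j(p) \le \tau\var(p)\cdot d\var(p)$, using $\tau$-regularity together with Corollary \ref{totalInfCor}, and normalizing so that $\var(p(A))=1$, the accumulated smooth error is $O(\lambda^{-4} d\,9^d\,\tau)$.

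To return to the indicator, I would bound $\bigl|\E[\psi_\lambda(p(X))] - \pr(p(X)\le t)\bigr|$ by the anticoncentration probability $\pr(|p(X)-t|\le O(\lambda))$, which by the Carbery--Wright Lemma \ref{anticoncentrationLem} is $O(d\lambda^{1/d})$ in the Gaussian case; the same bound then transfers to the Bernoulli side using the smoothed invariance estimate itself. Balancing $\lambda^{-4}\tau$ against $\lambda^{1/d}$ produces a bound of the form $O\bigl(d\,\tau^{1/O(d)}\bigr)$. I expect the main difficulty to be the delicate bookkeeping required to obtain the precise exponent $1/(8d)$ claimed in the statement: one must carefully track the hypercontractive factors $\sqrt{t-1}^d$ through both the Lindeberg residual and the mollification step, and handle the Bernoulli-side anticoncentration without circularity (this is typically resolved by proving the smooth invariance bound first and only then using it to transport the Gaussian anticoncentration estimate to the Bernoulli side).
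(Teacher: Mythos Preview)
The paper does not prove Theorem~\ref{InvarianceThm}; it is quoted as a known result of \cite{MOO} and used as a black box (the paper only proves the variant Proposition~\ref{absInvProp}, and even there the Lindeberg step is delegated to Theorem~4.1 of \cite{MInv}). So there is no ``paper's own proof'' to compare against here.

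That said, your outline is precisely the Mossel--O'Donnell--Oleszkiewicz argument and is correct as a sketch. A couple of minor remarks. First, when you invoke hypercontractivity on $\partial_j p(H^{(j-1)})$, the underlying measure is a product of Gaussians and Rademachers rather than a pure one; this is harmless since both satisfy the same $(2,4)$-hypercontractive inequality, but it is worth saying explicitly. Second, the Bernoulli-side anticoncentration can be avoided entirely by the standard sandwiching trick (take one mollifier $\psi^+\ge \mathbf{1}_{(-\infty,t]}$ and another $\psi^-\le \mathbf{1}_{(-\infty,t]}$, apply the smooth bound to each, and use Carbery--Wright only on the Gaussian side), so the ``circularity'' you flag never actually arises. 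Finally, your balancing of $\lambda^{-4}9^d d\,\tau$ against $d\lambda^{1/d}$ gives an exponent of $1/(4d+1)$, which is stronger than the $1/(8d)$ in the statement; the bookkeeping you are worried about is therefore not an obstacle to recovering the theorem as stated.
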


We will need a theorem similar to Theorem \ref{InvarianceThm}.  The following is proved by nearly identical means to Theorem \ref{InvarianceThm}:

\begin{prop}\label{absInvProp}
Let $p$ and $q$ be degree-$d$, multilinear polynomials in $n$ variables.  Suppose for some $\tau>0$ that $\Inf_i(p),\Inf_i(q)\leq \tau$ for all $i$.  Suppose furthermore that $|p+q|_2,|p-q|_2\geq 1.$  Then
$$
\pr(|p(A)|\leq |q(A)|) = \pr(|p(X)|\leq |q(X)|) + O(d\tau^{1/(8d)}).
$$
\end{prop}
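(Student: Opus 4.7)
The plan is to adapt the proof of Theorem~\ref{InvarianceThm} (MOO) to a vector-valued setting. The key reduction is that, outside a measure-zero set, the event $|p(z)|\leq|q(z)|$ coincides with $(q(z)-p(z))(q(z)+p(z))\geq 0$; equivalently, with $\sgn(r_+(z))=\sgn(r_-(z))$ where $r_+:=p+q$ and $r_-:=q-p$. Each $r_\pm$ is a degree-$d$ multilinear polynomial with $\Inf_i(r_\pm)\leq 2(\Inf_i(p)+\Inf_i(q))\leq 4\tau$ and $|r_\pm|_2\geq 1$ by hypothesis, so (after absorbing the constant term) each $r_\pm$ is $O(\tau)$-regular in the sense of MOO.

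The first step is to extend the Lindeberg replacement underlying Theorem~\ref{InvarianceThm} to the pair $(r_+,r_-)$: for any $\Psi\in C^3(\R^2)$,
$$
\bigl|\E[\Psi(r_+(A),r_-(A))]-\E[\Psi(r_+(X),r_-(X))]\bigr|=O\bigl(d\,\tau^{1/2}\,\|\Psi^{(3)}\|_\infty\bigr).
$$
The proof proceeds exactly as in MOO, replacing one coordinate at a time between $A$ and $X$, Taylor expanding $\Psi$ jointly in both arguments to third order, and controlling the residual terms via the influence bound and hypercontractivity (Lemma~\ref{hypLem}) applied to $\partial^k r_\pm/\partial x_i^k$ for $k\leq 3$.

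The second step is to approximate the indicator of $\{\sgn(r_+)=\sgn(r_-)\}$ by a smooth function. Let $\Phi:\R^2\to[0,1]$ be the indicator of $\{(a,b):ab\geq 0\}$ and let $\Phi_\eta$ be a smooth mollification at scale $\eta$, so that $\Phi$ and $\Phi_\eta$ agree outside $\{|a|\leq\eta\}\cup\{|b|\leq\eta\}$ and $\|\Phi_\eta^{(3)}\|_\infty=O(\eta^{-3})$. The smooth invariance gives
$$
\bigl|\E[\Phi_\eta(r_+(A),r_-(A))]-\E[\Phi_\eta(r_+(X),r_-(X))]\bigr|=O(d\,\tau^{1/2}/\eta^3),
$$
and the smoothing error is at most $\pr(|r_+(z)|\leq\eta)+\pr(|r_-(z)|\leq\eta)$ on each side. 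For $z=X$, Carbery-Wright (Lemma~\ref{anticoncentrationLem}) combined with $|r_\pm|_2\geq 1$ yields $O(d\eta^{1/d})$. For $z=A$, I would transfer the Gaussian anticoncentration bound by applying Theorem~\ref{InvarianceThm} to each of $r_+$ and $r_-$ separately, incurring an additional $O(d\tau^{1/(8d)})$.

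Optimizing $\eta$ to balance the smooth-invariance and smoothing contributions (so that $\tau^{1/2}/\eta^3$ matches $\eta^{1/d}$) yields the claimed bound $O(d\tau^{1/(8d)})$. The main obstacle is the first step: pushing MOO's Lindeberg argument from a single polynomial to the pair $(r_+,r_-)$. The Taylor expansion and hypercontractive moment estimates carry over essentially componentwise, so the extension is mechanical, but this is where all the genuine analytic work resides; the remaining steps are a standard mollification-plus-anticoncentration wrap-up.
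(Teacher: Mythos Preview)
Your approach is correct and is essentially the paper's own: both pass to the pair $(r_+,r_-)=(q+p,q-p)$, apply a vector-valued Lindeberg/invariance bound to a smooth approximation of a two-dimensional sign indicator, and control the smoothing error by Carbery--Wright using $|r_\pm|_2\geq 1$. The paper's organization differs only cosmetically: it first splits by the sign of $q$, so it only needs to smooth the single-quadrant indicator $\{r\geq 0,\,s\geq 0\}$ via a product $\rho(r)\rho(s)$ (hence Carbery--Wright is needed only on the Gaussian side), it cites Mossel's vector-valued invariance \cite{MInv} as a black box rather than rederiving the Lindeberg step, and it sets $\eta=\tau^{1/8}$ directly instead of optimizing. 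One small correction to your write-up: the hypercontractive third-moment bounds make the Lindeberg constant $2^{O(d)}$ rather than $O(d)$; the paper disposes of this by observing that when $\tau>d^{-d}$ the claimed bound is trivial, and otherwise $2^{O(d)}\tau^{1/8}=O(d\tau^{1/(8d)})$.
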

\begin{proof}
We note that it suffices to prove only that
$$
\pr(|p(A)|\leq |q(A)|) \leq \pr(|p(X)|\leq |q(X)|) + O(d\tau^{1/(8d)})
$$
and to note that the other direction follows from interchanging $p$ and $q$.

Note that
$$
\pr(|p(A)|\leq |q(A)|) = \pr(|p(A)|\leq q(A))  + \pr(|p(A)|\leq -q(A)),
$$
and
$$
\pr(|p(X)|\leq |q(X)|) = \pr(|p(X)|\leq q(X))  + \pr(|p(X)|\leq -q(X)),
$$
it suffices to show that
$$
\pr(|p(A)|\leq q(A)) \leq \pr(|p(X)|\leq q(X)) + O(d\tau^{1/(8d)}).
$$

Letting $r=q-p$ and $s=q+p$, we need to show that
\begin{equation}\label{reformulatedAbsValEqn}
\pr(r(A)\geq 0 \textrm{ and } s(A)\geq 0) \leq \pr(r(X)\geq 0 \textrm{ and } s(X)\geq 0) + O(d\tau^{1/(8d)}),
\end{equation}
where $r$ and $s$ are polynomials of degree-$d$, $L^2$ norm at least $1$, and maximum influence at most $\tau$.  By rescaling $r$ and $s$, we may assume that $|r|_2=|s|_2=1$.

Let $\rho$ be a smooth function so that $\rho(x)=1$ for $x>0$, $\rho(x)=0$ for $x< - \tau^{1/8}$, and $0\leq \rho(x)\leq 1$ for all $x$.  We note that such $\rho$ can be found with $|\rho^{(k)}(x)| = O(\tau^{-k/8})$ for all $x$ and all $1\leq k \leq 3$.  Define
$$
g(x) := \psi(r(x),s(x)) := \rho(r(x))\rho(s(x)).
$$
Since $g(x)=1$ whenever $r$ and $s$ are both positive,
$$
\pr(r(A)\geq 0 \textrm{ and } s(A)\geq 0) \leq \E[g(A)].
$$

We claim that
$$
|\E[g(A)]-\E[g(X)]| \leq 2^{O(d)}\tau^{1/8}.
$$
This follows immediately from Theorem 4.1 of \cite{MInv}, noting that $B=O(\tau^{-3/8})$.  Notice that if $\tau > d^{-d}$ that we have nothing to prove and that otherwise $2^{O(d)}\tau^{1/8}= O(d\tau^{1/(8d)})$.

We now need to bound the expectation of $g(X)$.  We note that $g(X)$ is 0 unless $r(X),s(X)\geq -\tau^{1/8}$.  This can happen only if either both are positive or at least one has absolute value at most $\tau^{1/8}$.  Thus
$$
\E[g(X)] \leq \pr(r(X)\geq 0 \textrm{ and } s(X)\geq 0) + \pr(|r(X)| \leq \tau^{1/8}) + \pr(|s(X)| \leq \tau^{1/8}).
$$
By Lemma \ref{anticoncentrationLem}, this is at most
$$
\pr(r(X)\geq 0 \textrm{ and } s(X)\geq 0) + O(d\tau^{1/(8d)}).
$$
Thus,
\begin{align*}
\pr(r(A)\geq 0 \textrm{ and } s(A)\geq 0) & \leq \E[g(A)] \\ & \leq \E[g(X)] + O(d\tau^{1/(8d)}) \\ & \leq \pr(r(X)\geq 0 \textrm{ and } s(X)\geq 0) +O(d\tau^{1/(8d)}).
\end{align*}
This proves Equation \eqref{reformulatedAbsValEqn}, and completes our proof.
\end{proof}

The invariance principle will turn out to be very useful to apply to regular polynomials, but for general polynomials we will need a way to reduce to this case.  For this purpose we can make use of the following result of \cite{reg}:

\begin{thm}[Diakonikolas, Servedio, Tan, Wan]\label{regLemThrm}
Let $f(x) = sign(p(x))$ be any degree-$d$ PTF. Fix any $\tau > 0$. Then $f$ is equivalent to a decision
tree $T$ , of depth
$$
\textrm{depth}(d,\tau) = \frac{1}{\tau}\cdot (d \log (\tau^{-1}))^{O(d)}
$$
with variables at the internal nodes and a degree-$d$ PTF $f_\rho = \sgn(p_\rho)$ at each leaf $\rho$, with the following
property: with probability at least $1 -\tau$, a random path from the root reaches a leaf $\rho$ such that $f_\rho$ is
$\tau$-close to some $\tau$-regular degree-$d$ PTF.
\end{thm}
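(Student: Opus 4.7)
The plan is to construct the decision tree greedily. Starting at the root with the original polynomial $p$, at each internal node I examine the restricted polynomial $p_\rho$ and check whether some coordinate $i$ satisfies $\Inf_i(p_\rho) > \tau \cdot \var(p_\rho)$. If so, I branch on the coordinate with the largest such influence; if not, I declare the node a leaf, at which point $p_\rho$ is genuinely $\tau$-regular and the required closeness condition holds trivially. To handle paths along which the variance of $p_\rho$ collapses relative to its mean, I add a second stopping condition: if $\var(p_\rho) < \tau \cdot \E[p_\rho(A)]^2$, stop, since by Corollary \ref{concentrationCor} the function $f_\rho$ is then $\tau$-close to a constant, which is a degenerate $\tau$-regular PTF.

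The main work is bounding the depth. I would track, along a random root-to-leaf path, the potential $\log(\var(p_\rho))$. Since by Corollary \ref{totalInfCor} we have $\sum_i \Inf_i(p_\rho) \leq d \cdot \var(p_\rho)$, at any non-leaf node there is a coordinate whose restriction reduces $\var(p_\rho)$ in expectation by at least a $\tau/d$ fraction of its current value. Thus after $O((d/\tau)\log(1/\tau))$ random restrictions along any path, the potential has dropped enough that the variance stopping condition fires (unless the regularity condition has already fired). This naive analysis suggests a depth bound on the order of $(d/\tau)\log(1/\tau)$; obtaining the claimed $\tau^{-1}(d\log(\tau^{-1}))^{O(d)}$ requires absorbing an additional $(d\log(\tau^{-1}))^{O(d)}$ factor coming from the mismatch between ``average-case'' and ``worst-path'' bounds.

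The source of that extra factor, and the main obstacle, is the following: after restricting one high-influence coordinate, the influences of the remaining coordinates, measured \emph{relative} to the new smaller variance of $p_\rho$, can blow up, so many additional coordinates may cross the $\tau \cdot \var$ threshold at once. To control this, I would apply the hypercontractive estimate of Lemma \ref{hypLem} to each partial derivative $\partial p / \partial x_j$, bounding its high moments by an $O(1)^d$ factor times its $L^2$ norm; this prevents the relative influences of the surviving coordinates from concentrating too sharply after a single restriction. Iterating this analysis across the $d$ ``layers'' of Fourier weight of $p$ — one for each degree between $1$ and $d$ — and union-bounding the bad events at each layer is what yields the final $(d\log(\tau^{-1}))^{O(d)}$ factor. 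Making this iteration quantitative, while keeping the total failure probability along a random path below $\tau$, is the technically delicate step and the place where I expect the bulk of the work to lie.
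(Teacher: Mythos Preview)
The paper does not itself prove Theorem~\ref{regLemThrm}; it is quoted from \cite{reg}. The paper does, however, prove a close variant (Proposition~\ref{regLem2Prop}, via Lemma~\ref{oneStepRegLem}), and comparing your sketch to that argument exposes a genuine gap.

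Your potential argument is incorrect. You assert that restricting a coordinate $i$ with $\Inf_i(p_\rho) > \tau\,\var(p_\rho)$ reduces $\var(p_\rho)$ in expectation by at least a $\tau/d$ fraction of its current value. Write $p = x_i q + r$ with $q,r$ free of $x_i$. Then $\var(p) = |q|_2^2 + \var(r)$, whereas $\E_{a_i}\bigl[\var(p|_{x_i=a_i})\bigr] = \var(q) + \var(r)$; the expected drop is $\E[q]^2 = \hat p(\{i\})^2$, the square of the degree-$1$ Fourier coefficient on $i$, \emph{not} $\Inf_i(p) = |q|_2^2$. For any $p$ with no linear part (take $p = x_1 x_2$: $\Inf_1(p)=\var(p)=1$, yet restricting $x_1$ leaves the variance at $1$) the expected variance is unchanged by every single-coordinate restriction, so your potential never moves and no depth bound follows. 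The ``layers of Fourier weight'' patch you propose does not repair this: the obstacle is not that influences blow up after a restriction, but that one-at-a-time restrictions need not make any progress on variance at all.

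The argument in \cite{reg}, and the one the paper gives for Lemma~\ref{oneStepRegLem}, sidesteps this by restricting in one shot the entire set $S$ of coordinates whose influence exceeds a threshold chosen well below $\tau$; since $\sum_i \Inf_i(p)\le d$, this bounds $|S|$. For each $j\notin S$, the quantity $\Inf_j(p_\rho)$ is a degree-$2d$ polynomial in the restricted bits with mean $\Inf_j(p)$, so Corollary~\ref{concentrationCor} keeps all surviving influences small simultaneously, while Corollary~\ref{WeakAnticoncentrationCor} keeps $|p_\rho|_2^2$ from collapsing. With probability $2^{-O(d)}$ one then lands in either the $\tau$-regular case or the small-variance/large-mean (hence near-constant) case, and iterating this batch step $2^{O(d)}\log(1/\tau)$ times yields the tree. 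The key idea you are missing is to freeze all currently large-influence coordinates at once and control the post-restriction influences directly, rather than trying to drive down variance one coordinate at a time.
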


Unfortunately, for our purposes, we will also require a stronger version of this Theorem.

\begin{prop}\label{regLem2Prop}
Let $p$ be a degree-$d$ polynomial on the hypercube and let $1/4> \tau,\epsilon,\delta >0$ be real numbers.  Then $p$ can be written as a decision tree of depth at most
$$
D = \tau^{-1}\left(d\log(\tau^{-1})\log(\epsilon^{-1})\right)^{O(d)}\log(\delta^{-1})
$$
with variables at the internal nodes and a degree-$d$ polynomial threshold function $f_\rho = \sgn(p_\rho)$ at each leaf $\rho$, with the following
property: that for a random leaf, $\rho$, with probability $1-\delta$ we have that $p_\rho$ is either $\tau$-regular, or constant sign with probability at least $1-\epsilon$.
\end{prop}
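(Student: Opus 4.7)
The target depth has the form (DSTW-style depth with an extra $\log\epsilon^{-1}$ factor) $\times$ $\log\delta^{-1}$, which strongly suggests the proof iterates Theorem \ref{regLemThrm} in $O(\log\delta^{-1})$ recursion levels, with the additional $\log\epsilon^{-1}$ factor coming from a more refined leaf-classification test than the one implicit in DSTW.

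My plan is as follows. First apply Theorem \ref{regLemThrm} to $p$ with parameter $\tau' = \min(\tau,\epsilon)^{C}$ for a suitable constant $C=C(d)$, producing a tree of depth $(\tau')^{-1}(d\log(\tau')^{-1})^{O(d)}$. At each leaf $\rho$ of this tree, perform the following deterministic classification: mark the leaf \emph{good} if either (i) $p_\rho$ is $\tau$-regular as a polynomial, or (ii) the normalized bias $|\E[p_\rho(A)]|/\sqrt{\var(p_\rho(A))}$ exceeds $2(\log\epsilon^{-1})^{d/2}$, since in that case Corollary \ref{concentrationCor} applied to the centered polynomial $p_\rho-\E[p_\rho(A)]$ gives $\pr(\sgn(p_\rho(A))\neq\sgn(\E[p_\rho(A)]))\leq\epsilon$. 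Leaves that satisfy neither test are \emph{undecided}; at each such leaf I would recurse, applying Theorem \ref{regLemThrm} once more to $p_\rho$. Iterating $\Theta(\log\delta^{-1})$ times and appending the DSTW subtrees at every undecided leaf yields a tree of the advertised total depth.

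The main obstacle is showing that each level of recursion shrinks the total probability mass of undecided leaves by a constant factor strictly less than one; only with this geometric decay do $O(\log\delta^{-1})$ rounds suffice to reach error $\delta$. At an undecided leaf $\rho$, DSTW promises with probability $1-\tau'$ that $f_\rho$ is $\tau'$-close to some $\tau'$-regular PTF $g_\rho=\sgn(q_\rho)$, while $p_\rho$ is neither $\tau$-regular nor biased. One would argue that these constraints together force $p_\rho$, after rescaling, to be close in $L^2$ to $\pm q_\rho$: the failure of $p_\rho$ to be regular despite its sign being close to a regular sign must come from low-$L^2$ high-coordinate-influence perturbations, whose effect is killed by a further restriction. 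Making this ``structural proximity'' quantitative is the delicate technical step; it likely passes through the Gaussian world via Proposition \ref{absInvProp}, uses Lemma \ref{anticoncentrationLem} to rule out $q_\rho$ being pathologically concentrated near zero, and then returns to Bernoulli to conclude that a second application of DSTW at $\rho$ decides a constant fraction of the remaining mass.
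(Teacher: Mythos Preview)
Your high-level outline---iterate a one-step restriction on undecided leaves roughly $\log\delta^{-1}$ times---matches the paper's, and your test (ii) for biased leaves is exactly the right one. But the execution you propose has two genuine problems.

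First, the depth. Invoking Theorem~\ref{regLemThrm} with parameter $\tau'=\min(\tau,\epsilon)^{C}$ gives a subtree of depth $(\tau')^{-1}(d\log(\tau')^{-1})^{O(d)}$, which is \emph{polynomial} in $\epsilon^{-1}$ whenever $\epsilon<\tau$. The target depth $\tau^{-1}(d\log\tau^{-1}\log\epsilon^{-1})^{O(d)}\log\delta^{-1}$ has only polylogarithmic dependence on $\epsilon^{-1}$, so your very first step already overshoots the claimed bound by an arbitrary factor.

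Second, the geometric-decay step is a real gap, not a detail to be filled in. Theorem~\ref{regLemThrm} only promises that $f_\rho=\sgn(p_\rho)$ is $\tau'$-\emph{close as a Boolean function} to some regular PTF $\sgn(q_\rho)$; it says nothing about $p_\rho$ itself. A DSTW-good leaf can perfectly well have $p_\rho$ carrying a single huge-influence coordinate that almost never flips the sign, together with mean well inside one standard deviation---so both of your tests (i) and (ii) fail there. Turning Boolean closeness of $\sgn(p_\rho)$ and $\sgn(q_\rho)$ into $L^2$ closeness of $p_\rho$ and $q_\rho$ is false in general, and nothing in your sketch rescues it.

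The paper sidesteps both issues by \emph{not} using Theorem~\ref{regLemThrm} as a black box. It proves a tailored one-step lemma (Lemma~\ref{oneStepRegLem}): take $S$ to be all coordinates of influence exceeding $\tau\,(d\log\tau^{-1}\log\epsilon^{-1})^{-Md}$, so $|S|\le \tau^{-1}(d\log\tau^{-1}\log\epsilon^{-1})^{O(d)}$ by the total-influence bound. After randomly restricting $S$, one shows \emph{directly} that with probability at least $2^{-O(d)}$ both $|p_\rho|_2^2\ge 1/2$ (weak anticoncentration, Corollary~\ref{WeakAnticoncentrationCor}, applied to the degree-$2d$ polynomial $|p_\rho|_2^2$) and $\max_i\Inf_i(p_\rho)\le \tau(4\log\epsilon^{-1})^{-d/2}$ (Corollary~\ref{concentrationCor} plus a union bound over influence levels). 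When both events hold, either $\var(p_\rho)\ge (4\log\epsilon^{-1})^{-d/2}$ and $p_\rho$ is $\tau$-regular, or $\var(p_\rho)$ is small, forcing $|\E[p_\rho]|\ge 1/2$, and then Corollary~\ref{concentrationCor} gives constant sign with probability $1-\epsilon$. Thus each round resolves a $2^{-O(d)}$ fraction of the remaining mass; iterating $2^{O(d)}\log\delta^{-1}$ times yields Proposition~\ref{regLem2Prop}, with the extra $2^{O(d)}$ absorbed into the $(\cdots)^{O(d)}$ factor. The key point is that the influence threshold is chosen relative to the \emph{eventual} bias test, which is what produces the $\log\epsilon^{-1}$ factor inside the $O(d)$-th power rather than an $\epsilon^{-1}$ outside it.
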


Proposition \ref{regLem2Prop} will follow from repeated application of the following Lemma.
\begin{lem}\label{oneStepRegLem}
Let $p$ be a degree-$d$ polynomial on the hypercube and let $1/4 > \tau,\epsilon>0$ be real numbers.  There exists a set $S$ of coordinates with
$$
|S| \leq \tau^{-1}\left(d\log(\tau^{-1})\log(\epsilon^{-1})\right)^{O(d)}
$$
so that after assigning random values to the coordinates of $S$, with probability at least $2^{-O(d)}$ over the choice of assignments, the restricted polynomial $p_\rho$ is either $\tau$-regular or has constant sign with probability at least $1-\epsilon$.
\end{lem}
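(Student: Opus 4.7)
I would prove Lemma \ref{oneStepRegLem} via a greedy restriction procedure driven by a potential function. After normalizing $|p|_{B,2}=1$, call a polynomial $q$ \emph{good} if either $q$ is $\tau$-regular or $\hat q(\emptyset)^2 \geq M\var(q)$ with $M = (\log(\epsilon^{-1}))^{O(d)}$ chosen so that, by Corollary \ref{concentrationCor} applied to $q - \hat q(\emptyset)$, the second condition forces $q$ to take constant sign with probability at least $1-\epsilon$. If $p$ is already good, take $S=\emptyset$. Otherwise there is a coordinate $i$ with $\Inf_i(p) > \tau\var(p)$; add $i$ to $S$, and recurse on the random restriction of $x_i$.

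The analysis tracks $\Phi(q) := \var(q)/|q|_{B,2}^2 \in [0,1]$; the constant-sign clause of good corresponds to $\Phi \leq 1/(1+M)$. Decomposing $q = q_{\bar i} + x_i q'_{\bar i}$, one computes $\E_{A_i}[\var(q_{A_i})] = \var(q) - \hat q(\{i\})^2$ and $\E_{A_i}[|q_{A_i}|_{B,2}^2] = |q|_{B,2}^2$. The hypothesis $\Inf_i(q) = \hat q(\{i\})^2 + \var(q'_{\bar i}) > \tau\var(q)$ splits into two cases. In the \emph{linear case}, $\hat q(\{i\})^2 \geq \tau\var(q)/2$, the identity $\var(q_+)+\var(q_-) \leq (2-\tau)\var(q)$ directly gives $\Phi(q_{A_i}) \leq (1-\Omega(\tau))\Phi(q)$ with probability at least $1/2$ over the random sign. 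In the \emph{nonlinear case}, $\var(q'_{\bar i}) > \tau\var(q)/2$, I would apply weak anticoncentration (Corollary \ref{WeakAnticoncentrationCor}) to the degree-$(d-1)$ polynomial $q'_{\bar i}$ to get $|q'_{\bar i}(A_{\bar i})| \geq |q'_{\bar i}|_{B,2}/2$ with probability at least $9^{-d}/2 = 2^{-O(d)}$, and then combine this with an induction on degree (restricting further variables guided by the structure of $q'_{\bar i}$) to obtain a multiplicative drop in $\Phi$ over a bounded number of additional restrictions.

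Since $\Phi \in [0,1]$ drops by a factor $1-\Omega(\tau)$ per successful step and must reach $1/(1+M)$, at most $K = O(\tau^{-1}\log M) = \tau^{-1}(d\log\tau^{-1}\log\epsilon^{-1})^{O(d)}$ steps are needed, matching the claimed bound on $|S|$. The main obstacle lies in aggregating per-step success probabilities: naively, $K$ steps each succeeding with probability $2^{-O(d)}$ combine to $2^{-O(dK)}$, far less than the promised $2^{-O(d)}$. I expect the resolution to be a careful amortization showing that most steps fall into the linear case (which carries constant per-step success probability), while nonlinear-case invocations are used at most $O(d)$ times along any successful path, since each such invocation can be arranged to strictly reduce the degree of the dominant nonlinear structure and this can happen at most $d$ times before the polynomial becomes linear. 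Making this amortization rigorous, and in particular choosing $S$ so that a random assignment routes through a successful sequence of case-A/case-B events with probability at least $2^{-O(d)}$, is the main challenge.
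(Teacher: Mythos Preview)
Your proposal has a genuine gap, and it is exactly the one you flag: compounding the per-step success probabilities. Even in your ``linear case'' the success probability is only $1/2$, so over $K = \tau^{-1}(d\log\tau^{-1}\log\epsilon^{-1})^{O(d)}$ steps a naive product gives $2^{-K}$, not $2^{-O(d)}$. Your proposed amortization does not close this: restricting a single coordinate never lowers the degree of $q$, so there is no meaningful ``degree of the dominant nonlinear structure'' that strictly decreases along the path, and hence no reason the nonlinear case is invoked at most $O(d)$ times. Even granting that, the linear-case steps alone, each succeeding with probability $1/2$, still multiply to something exponentially small in $K$. Without a mechanism that makes \emph{most} steps succeed with probability $1-o(1)$ (not merely a constant), the greedy scheme cannot deliver the $2^{-O(d)}$ bound.

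The paper's proof avoids the compounding issue entirely by restricting \emph{all} high-influence coordinates at once: take $S$ to be the set of coordinates with influence exceeding a threshold $m = \tau(d\log\tau^{-1}\log\epsilon^{-1})^{-Md}$; total influence $\leq d$ bounds $|S|$. Then only two events are needed simultaneously: (i) $|p_\rho|_2^2 \geq 1/2$, which is a single application of weak anticoncentration to the degree-$2d$ polynomial $\rho \mapsto |p_\rho|_2^2$ and costs $2^{-O(d)}$; and (ii) $\max_{i\notin S}\Inf_i(p_\rho) \leq \tau(4\log\epsilon^{-1})^{-d/2}$, which holds with probability $1 - 2^{-\Omega(d)}$ by hypercontractive concentration (Corollary~\ref{concentrationCor}) applied to each $\Inf_i(p_\rho)$ and a union bound, since the pre-restriction influences were already below the threshold $m$. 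The key point you are missing is that concentration (not anticoncentration) handles the influences, so their contribution to the failure probability is negligible rather than multiplicative; the single $2^{-O(d)}$ loss comes from one anticoncentration event, not many.
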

\begin{proof}
We assume without loss of generality that $p$ is multilinear with $|p|_2=1$.  We take $S$ to simply be the set of all coordinates of influence more than
$$
\tau ( d \log(\tau^{-1})\log(\epsilon^{-1}))^{-Md}
$$
for $M$ a sufficiently large constant.  We have that $|S|$ will be of the appropriate order since the total influence of $p$ is at most $d$.  We claim that with probability at least $2^{-O(d)}$ that both of the following hold:
\begin{equation}\label{smallResEqn}
|p_\rho|_2^2 \geq 1/2.
\end{equation}
\begin{equation}\label{maxInfEqn}
\max_i(\Inf_i(p_\rho)) \leq \tau (4 \log(\epsilon^{-1}))^{-d/2}.
\end{equation}

As for Equation \eqref{smallResEqn}, we note that $|p_\rho|_2^2$ is a polynomial of degree at most $2d$ in the assignments of the coordinates in $S$.  Furthermore its expectation is $|p|_2^2$.  Therefore, the $L^2$ norm of this polynomial is at least $|p|_2^2=1$, and hence by Corollary \ref{WeakAnticoncentrationCor}, Equation \eqref{smallResEqn} holds with probability $2^{-O(d)}$.  We now need to show that Equation \eqref{maxInfEqn} fails to hold with at most half of this probability.  We note that for each $i$ that $\Inf_i(p_\rho)$ is the sum of squares of degree-$d$ polynomials in the assignments of coordinates of $S$, and has mean value $\Inf_i(p)$.  Thus it is given by some degree-$2d$ polynomial, $q$ with $|q|_1=\Inf_i(p)$.  By Corollary \ref{WeakAnticoncentrationCor}, $|q_i|_1 \geq 2^{-O(d)}|q_i|_2/2$, and thus $|q_i|_2 = 2^{O(d)}\Inf_i(p)$.  Now, for each $i\not\in S$, $\Inf_i(p)\leq \tau (d \log(\tau^{-1})\log(\epsilon^{-1}))^{-Md}:=m.$  By Corollary \ref{concentrationCor}, we have that for $M$ sufficiently large
\begin{align*}
\pr\left(\Inf_i(p_\rho) > \tau (4 \log(\epsilon^{-1}))^{-d/2}\right) & \leq m2^{-Md}\exp\left(-d\left(\frac{m}{\Inf_i(p)} \right)^{1/d}\right).
\end{align*}
Since there are at most $d 2^k m^{-1}$ coordinates $i$ for which $\Inf_i(p) \in [m 2^{-k}, m2^{-k+1}]$, the probability that any coordinate of $p_\rho$ has too large an influence is at most
\begin{align*}
\sum_{k=1}^\infty d 2^k m^{-1} m2^{-Md}\exp\left(-d 2^{(k-1)/d}\right) \leq d2^{-Md} 2^{O(d)}\sum_{\ell=0}^\infty 2^{d\ell} \exp\left(-d 2^{\ell}\right)& \leq 2^{O(d)}d2^{-Md},
\end{align*}
which is sufficiently small.

Now if Equations \ref{smallResEqn} and \ref{maxInfEqn} both hold, then either $\var(p_\rho)\geq (4\log(\epsilon^{-1}))^{-d/2}$, in which case $p_\rho$ is $\tau$-regular, or $\var(p_\rho) \leq (4\log(\epsilon^{-1}))^{-d/2}.$  In the latter case, since $1/2 \leq |p_\rho|_2^2 = \var(p_\rho) + \E[p_\rho]^2$, we have that letting $\mu = \E[p_\rho]$ that $|\mu|\geq 1/2$.  Furthermore, $|p_\rho -\mu|_2^2 = \var(p_\rho) \leq (4\log(\epsilon^{-1}))^{-d/2}.$  Therefore, by Corollary \ref{concentrationCor}, we have with probability at least $1-\epsilon$ that
$$
|p_\rho(A)-\mu| < |\mu|.
$$
And thus with probability at least $1-\epsilon$, $p_\rho$ has the same sign as $\mu$.  This completes our proof.
\end{proof}

Proposition \ref{regLem2Prop} now follows from applying the construction in Lemma \ref{oneStepRegLem} repeatedly to the leaves that do not yet satisfy one of the necessary conditions up to a total of at most $2^{O(d)}\log(\delta^{-1})$ times.

\section{Overview of our Technique}\label{overviewSec}

\subsection{Proof of a Simpler Bound}\label{weakBoundSec}

We begin by providing a somewhat detailed sketch of a proof of the slightly weaker bound that
$$
\mas(d,n) \leq \sqrt{n}\exp\left( O(d \log\log(n))^2 \right).
$$
Starting with a polynomial threshold function $f=\sgn(p(x))$ for $p$ a degree-$d$ multilinear polynomial threshold function in $n$ variables, we begin by using Theorem \ref{regLemThrm} to reduce to the case where $p$ is $n^{-1/2}$-regular, introducing an error of $\sqrt{n}O(d\log(n))^{O(d)}$ in the process.  We then split the coordinates into $b$ blocks of roughly equal size for $b= n^{1/\Theta(d)}$, and note that the sensitivity of $f$ is the sum over blocks of the sensitivity of $f$ randomly restricted to a function on only that block of coordinates.  We note that by Corollary \ref{concentrationCor} that if any of these restrictions have an expected value that exceeds their standard deviation by a factor of more than about $\log(n)^{d/2}$, then the polynomial will have constant sign with high probability and can thus be ignored.  We call a block for which this does not happen \emph{good}.

We thus have that the average sensitivity of $f$ is bounded by the expected number of good blocks times $\mas(d,n/b)$.  It is not hard to show that a polynomial $q$ with standard deviation at least $\log(n)^{-d/2}$ times the absolute value of its expectation, has a reasonable probability of having
$$
\frac{|\nabla q(A)|}{|q(A)|} > 2^{-O(d)}\log(n)^{-d/2}.
$$
This allows one to bound the expected number of good blocks in terms of the expectation of
$$
\max\left(b,\left( \frac{|\nabla p(A)|}{|p(A)|}\right)^2\right).
$$
Or more tractably, in terms of the expectation of
$$
\max\left(b,\left( \frac{|D_B p(A)|}{|p(A)|}\right)^2\right).
$$
On the other hand, we can use Lemma \ref{stronganticoncentrationLem} and Proposition \ref{absInvProp} to show that
$$
\pr\left( \frac{|D_B p(A)|}{|p(A)|} > \sqrt{k} \right) \approx k^{-1/2}
$$
for each $k$.  This lets us bound the expected number of good blocks by $O(\log(n))^{d}\sqrt{b}$.  This provides us with a recursive bound for the average sensitivity, which comes out to roughly
$$
\mas(d,n) \leq O(\log(n))^d n^{1/(16d)} \mas(d,n^{1-1/(8d)}),
$$
which gives the bound required.

Unfortunately, in the above argument, the requirement that we only consider whether or not a block is good has cost us a factor of $\log(n)^d$ at each recursive step, yielding a bound off by a factor of $\exp(d^2\log\log(n)^2)$.  By being less strict with our reductions, we can instead lose only a $\poly(d)$ factor at each step, yielding a bound with only polylogarithmic error.  In order to do this, instead of simply considering whether or not a block is good, we consider more detailed information about the ratio of its value and its derivative at a random point.  To do this we will need to introduce some new machinery, which we do in the next Section.

\subsection{The $\alpha$ Function}

The following will prove to be a key concept for our analysis:

\begin{defn}
For $p$ a non-zero polynomial we let $\alpha(p)$ be defined by
$$
\alpha(p) := \E\left[\min\left( 1 , \frac{|D_B p(A)|^2}{|p(A)|^2}\right)\right].
$$
Similarly, let
$$
\beta(p) := \E\left[\min\left( 1 , \frac{|D_Y p(X)|^2}{|p(X)|^2}\right)\right].
$$
\end{defn}

We will bound the noise sensitivity of a polynomial threshold function in terms of $\alpha(p)$.  First we introduce some notation:
\begin{defn}
Let $\masa(d,n,a)$ be the maximum average sensitivity of a polynomial threshold function $f(x)=\sgn(p(x))$ where $p$ is a polynomial of degree at most $d$ in at most $n$ variables with $\alpha(p)\leq a$.

Let $\mrasa(d,n,a,\tau)$ be the maximum average sensitivity of a polynomial threshold function $f(x)=\sgn(p(x))$ where $p$ is a $\tau$-regular polynomial of degree at most $d$ in at most $n$ variables with $\alpha(p)\leq a$.
\end{defn}

In particular, we will prove:

\begin{prop}\label{newMainProp}
$$
\masa(d,n,a) \leq a\sqrt{n}(\log(n))^{O(d\log(d))}2^{O(d^2\log(d))}.
$$
\end{prop}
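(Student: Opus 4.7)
I would prove the proposition by induction on $n$, taking the stated bound as the inductive hypothesis. Write $L(n) := (\log n)^{O(d\log d)} 2^{O(d^2\log d)}$, so the goal is $\masa(d,n,a) \leq a \sqrt{n}\, L(n)$, with the base case $n = O_d(1)$ handled by the trivial $\as(f) \leq n$. For the inductive step, let $f = \sgn(p)$ with $\alpha(p) \leq a$. I first apply Proposition \ref{regLem2Prop} with $\tau \approx n^{-1/\Theta(d)}$ and with $\epsilon, \delta$ polynomially small in $n$; this writes $f$ as a decision tree of polylogarithmic depth whose leaves are either $\tau$-regular PTFs or near-constant-sign PTFs. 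The near-constant-sign leaves contribute at most $O(\epsilon n)$ to sensitivity, the $\delta$-fraction of bad leaves contribute at most $O(\delta n)$, and the tree depth contributes only polylog factors. So up to these negligible losses I may assume $p$ is $\tau$-regular.

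Next, partition the $n$ variables into $b = n^{1/\Theta(d)}$ roughly equal blocks. Decomposing $\as(f) = \sum_{k=1}^b \E_\rho[\as(f|_\rho)]$ over random Bernoulli restrictions $\rho$ of the coordinates outside block $k$, and applying the inductive hypothesis to each restricted PTF on $n/b$ variables, gives
\[
\as(f) \;\leq\; \sqrt{n/b}\, L(n/b) \cdot \sum_{k=1}^b \E_\rho[\alpha(p|_\rho)].
\]
Since the recursion bottoms out in $O(d)$ levels (as $b^{O(d)} \approx n$) and the per-level slack $L(n)/L(n/b) = 2^{O(\log d)}$ is exactly enough to absorb a per-level factor of $(\log n)^{O(\log d)} 2^{O(d\log d)}$, the induction closes provided one establishes the key inequality
\[
\sum_{k=1}^b \E_\rho[\alpha(p|_\rho)] \;\leq\; a \sqrt{b}\,(\log n)^{O(\log d)} 2^{O(d\log d)}. \qquad (\star)
\]

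Unpacking, $(\star)$ rewrites as a bound on $\E_A \sum_k \E_{B_k}[\min(1, |D_{B_k}p(A)|^2/p(A)^2)]$, where $B_k$ is Bernoulli supported on block $k$. The plan is to use Proposition \ref{absInvProp} (applied with the pairs $p \pm D_{B_k} p$, whose max influences can be kept small by choosing $\tau$ sufficiently small relative to $b$) to transfer this expression to its Gaussian analogue. In the Gaussian setting, for each fixed $X$ the derivative $D_{Y_k} p(X)$ is $\mathcal{N}(0, v_k)$ with $v_k = \sum_{i \in \text{block}\,k}(\partial_i p(X))^2$ and $\sum_k v_k = |\nabla p(X)|^2$. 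The inner Gaussian expectation then behaves like $\min(1, v_k/p(X)^2)$ up to $9^d$ factors, by Paley--Zygmund (Corollary \ref{WeakAnticoncentrationCor}) combined with hypercontractivity (Lemma \ref{hypLem}). One then has the pointwise bound $\sum_k \min(1, v_k/p^2) \leq \min(V_X, b)$, where $V_X := |\nabla p(X)|^2/p(X)^2$, and bounds $\E_X[\min(V_X, b)]$ by splitting $\int_0^b \pr_X(V_X > t)\,dt$ using two complementary tail estimates on $V_X$: near zero, the Paley--Zygmund/hypercontractivity relation ties $\E_X[\min(1, V_X)]$ to $\beta(p) \approx \alpha(p) = a$ up to $9^d$; far from zero, strong anticoncentration (Lemma \ref{stronganticoncentrationLem}) gives $\pr_X(V_X > t) = O(d^2/\sqrt{t})$.

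The main obstacle is obtaining genuinely \emph{linear} (not square-root) dependence on $a$ in $(\star)$. A naive Cauchy--Schwarz estimate replaces $\sum_k \min(1,v_k/p^2) \leq \min(V_X, b)$ by $\sqrt{b V_X}$, which after taking expectations yields only $\sum_k \E[\alpha(p|_\rho)] \lesssim \sqrt{b\, \E[V_X \wedge b]} \lesssim \sqrt{b(9^d a + d^2 \sqrt{b})}$: this has $\sqrt{a}$ dependence and contains a $b^{3/4}$ term independent of $a$. Avoiding these losses requires a careful regime split: when $V_X$ is small (the typical regime for small $a$) one keeps the sharper pointwise bound $\sum_k \min(1, w_k) \leq V_X$, which gives linear $a$-dependence directly; only in the atypical large-$V_X$ regime does one invoke the $\sqrt{b V_X}$ bound combined with the $O(d^2/\sqrt{t})$ tail, where the square-root-heavy decay from Lemma \ref{stronganticoncentrationLem} is essential for extracting the $\sqrt{b}$ factor rather than $b$. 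Getting these two regimes to balance correctly, while also tracking the various $2^{O(d)}$ and $\poly(d)$ constants contributed by hypercontractivity, invariance, Paley--Zygmund, and anticoncentration, is the technical heart of the argument.
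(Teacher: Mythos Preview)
Your high-level architecture matches the paper's exactly: regularize via Proposition~\ref{regLem2Prop}, split into $b=n^{1/\Theta(d)}$ blocks, bound $\sum_k \E_\rho[\alpha(p|_\rho)]$, and induct. You also correctly identify that the entire difficulty lies in obtaining \emph{linear} dependence on $a$ in your inequality~$(\star)$. However, the regime split you propose does not close that gap.

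Concretely: on the ``large-$V_X$'' event you plan to combine the pointwise bound $\sum_k \min(1,w_k)\le \min(V_X,b)$ (or $\sqrt{bV_X}$) with the tail estimate $\pr(V_X>t)=O(d^2/\sqrt t)$ from Lemma~\ref{stronganticoncentrationLem}. But that tail bound carries no dependence on $a$ or $\beta(p)$, so integrating it over $t\in[1,b]$ produces an $a$-independent term $O(d^2\sqrt b)$ (and the $\sqrt{bV_X}$ route is worse, since $\int_1^{\sqrt b} d^2/s\,ds$ only gives $O(d^2\sqrt b\log b)$). Your ``small-$V_X$'' regime does yield a term $\lesssim a$, but you are left with
\[
\sum_k \E_\rho[\alpha(p|_\rho)] \;\lesssim\; 2^{O(d)}a \;+\; O(d^2\sqrt b),
\]
not $a\sqrt b\cdot\poly(d)$. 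The only way to absorb the second term into the first is to invoke Lemma~\ref{smallAlphaLem} and restrict to $a\ge (K\log n)^{-d}$, which costs a factor $(K\log n)^{d}$ at \emph{every} level of the recursion; over $O(d\log\log n)$ levels this blows the final bound up to $(\log n)^{O(d^2)}$ rather than the claimed $(\log n)^{O(d\log d)}$.

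What the paper does instead is strengthen strong anticoncentration itself: Lemma~\ref{alphaStrongAnticoncentrationLem} shows
\[
\pr\bigl(|p(X)|\le \epsilon\,|D_Yp(X)|\bigr)=O\bigl(d^3\,\beta(p)\,\epsilon\bigr),
\]
i.e.\ the tail already contains the factor $\beta(p)\approx\alpha(p)=a$. The proof exploits that when $\beta(p)$ is small, the restriction $g(\theta)=p(\cos\theta\,X+\sin\theta\,Y)$ typically has its constant Fourier coefficient dominating all others, forcing $|g'|/|g|\le d$ uniformly in~$\theta$. With this refined lemma, the integral $\int_1^b\pr(|p(X)|\le t^{-1/2}|D_Yp(X)|)\,dt$ is directly $O(d^3\beta(p)\sqrt b)=O(d^3 a\sqrt b)$, and the induction closes losing only $\poly(d)$ per step. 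Your proposal never invokes or reproves this refinement, and without it the argument as written falls short of the stated exponent.
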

Theorem \ref{newMainThm} will follow as an immediate Corollary of Proposition \ref{newMainProp}.

We will require a version of Lemma \ref{stronganticoncentrationLem} that takes $\beta(p)$ into account.  In particular, we use the following:
\begin{lem}\label{alphaStrongAnticoncentrationLem}
Let $p$ be a degree-$d$ polynomial in any number of variables and $1>\epsilon>0$ a real number.  Then
$$
\pr(|p(X)| \leq \epsilon |D_Y p(X)|) = O(d^3\beta(p) \epsilon).
$$
\end{lem}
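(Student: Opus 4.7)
The plan is to follow the structure of the proof of Lemma \ref{stronganticoncentrationLem}, applying the rotation trick and then refining the inner estimate to pick up the $\beta(p)$ factor. Setting $X_\theta = \cos\theta\, X + \sin\theta\, Y$ and $Y_\theta = -\sin\theta\, X + \cos\theta\, Y$, so that $(X_\theta, Y_\theta) \sim (X, Y)$ for every $\theta$, and writing $R(X, Y, \theta) := |\partial_\theta p(X_\theta)|/|p(X_\theta)|$, rotation invariance will yield
\begin{align*}
\pr(|p(X)| \leq \epsilon|D_Y p(X)|) &= \E_{X,Y}\bigl[\pr_\theta(R(X, Y, \theta) \geq 1/\epsilon)\bigr],\\
\beta(p) &= \E_{X,Y}\bigl[\E_\theta[\min(1, R(X,Y,\theta)^2)]\bigr] =: \E_{X,Y}\bigl[\beta(X,Y)\bigr].
\end{align*}
It therefore suffices to prove the pointwise inequality
$$\pr_\theta\bigl(R(X, Y, \theta) \geq 1/\epsilon\bigr) \leq Cd^3\epsilon \cdot \beta(X, Y) \qquad (\star)$$
for every $(X, Y)$ that does not make $p(X_\theta)$ identically zero in $\theta$, and then to integrate over $(X,Y)$.

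To prove $(\star)$, I would write $p(X_\theta) = e^{-id\theta} q(e^{i\theta})$ for a polynomial $q$ of degree $g \leq 2d$ depending on $(X,Y)$, and factor $q(z) = c\prod_{k=1}^g (z - r_k)$. Exactly as in Lemma \ref{stronganticoncentrationLem}, for $\epsilon < 1/(4d)$ the event $R \geq 1/\epsilon$ forces $|e^{i\theta} - r_k| < 8d\epsilon$ for some $k$; only roots with $\bigl||r_k| - 1\bigr| \leq 8d\epsilon$ (the ``near-circle'' roots) can witness this, and each such root contributes a $\theta$-arc of measure $O(d\epsilon)$. Letting $N$ denote the number of near-circle roots, this gives the upper bound
$$\pr_\theta(R \geq 1/\epsilon) \leq O(d\epsilon) \cdot N.$$

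The main work will be the complementary lower bound $\beta(X, Y) \geq cN/d^2$ for some absolute constant $c$. The intuition is that each near-circle root $r_k$ creates a localized bump in $R$ near $\theta = \arg r_k$, where the pole $1/(e^{i\theta} - r_k)$ dominates the sum $q'(z)/q(z) = \sum_k 1/(z-r_k)$, contributing $\Omega(1/d^2)$ to $\int_0^{2\pi}\min(1, R^2)\,d\theta/(2\pi)$. The hard part will be ruling out that cancellations among the terms $1/(z - r_j)$ for clustered roots wash these bumps out. I plan to handle this by partitioning the unit circle into arcs, each containing the angular projection of exactly one root, and arguing on each arc that $|1/(z - r_k)|$ exceeds the sum of the other contributions by a factor $\Omega(1/d)$ on a sub-arc of comparable length; the slack $d^2$ absorbs the interference from the remaining $\leq 2d$ poles. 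Combining this lower bound with the upper bound on the probability yields $(\star)$. Finally, the regime $\epsilon \geq 1/(Cd^3)$ is handled trivially using Markov's inequality applied to $\min(1, R^2)$: $\pr(R \geq 1/\epsilon) \leq \pr(R \geq 1) \leq \beta(p)$, and $Cd^3\epsilon \geq 1$ in this regime then makes $(\star)$ hold automatically after averaging.
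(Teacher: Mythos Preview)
Your reduction to the pointwise inequality $(\star)$ via the rotation trick is exactly how the paper sets things up, and the handling of large $\epsilon$ by Markov is the same. The divergence is in how $(\star)$ is proved for small $\epsilon$.

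The paper does \emph{not} count near-circle roots. Instead it expands $g(\theta)=p(X_\theta)=\sum_{m=-d}^{d}a_m e^{im\theta}$ and dichotomizes on whether the constant term dominates. If $|a_0|>2\sum_{m\neq 0}|a_m|$, then $|g|\geq |a_0|/2$ and $|g'|\leq d|a_0|/2$, so $R\leq d<1/\epsilon$ everywhere and there is nothing to prove. In the complementary case one has $|g(\theta)|\leq \sum|a_m|\leq 3\sum_{m\neq 0}|a_m|$ uniformly, while $\E_\theta|g'|^2=\sum m^2|a_m|^2\gtrsim (\sum_{m\neq 0}|a_m|)^2/d$; from this one reads off directly that $R\gtrsim 1/d$ on a substantial set of $\theta$, which lower-bounds $\beta(X,Y)$ without ever looking at the roots of $q$. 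Combined with the Lemma \ref{stronganticoncentrationLem} upper bound $\pr_\theta(R\geq 1/\epsilon)=O(d^2\epsilon)$ on this event, the bound follows.

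Your route through the root count $N$ is more delicate, and the step you flag as ``the hard part'' is a genuine gap. The plan ``partition the circle into arcs, one per near-circle root, and on each arc show the corresponding pole dominates the others by a factor $\Omega(1/d)$'' does not survive clustering: if several near-circle roots share essentially the same argument, the arcs degenerate and there is no sub-arc on which a single term $1/(z-r_k)$ beats the sum of its neighbors by a fixed margin. The assertion that ``the slack $d^2$ absorbs the interference'' is not an argument---you would need a quantitative separation lemma for sums of simple poles on the circle, and none is supplied. So as written the lower bound $\beta(X,Y)\geq cN/d^2$ is unproven, and without it your upper bound $\pr_\theta(R\geq 1/\epsilon)\leq O(d\epsilon)N$ cannot be converted into $(\star)$. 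The Fourier-coefficient dichotomy above sidesteps this entirely and is what you should use instead.
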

\begin{proof}
If $\epsilon \geq d^{-3}$, the result follows from the fact that
$$
\pr(|p(X)| \leq \epsilon |D_Y p(X)|) \leq \pr(|p(X)| \leq |D_Y p(X)|) \leq \beta(p).
$$
Thus we may assume that $\epsilon \leq d^{-3}$.

For random $X$,$Y$, let $g(\theta) = p(\cos(\theta)X+\sin(\theta)Y).$  By the proof of Lemma \ref{stronganticoncentrationLem}, we have that the probability in question is $$
\pr(|g(\theta)| \leq \epsilon |g'(\theta)|) \ \ \ \leq \ \ \ O(d^2 \epsilon)\pr_{X,Y}\left(\exists \theta: \frac{|g'(\theta)|}{|g(\theta)|} > \epsilon^{-1} \right).
$$
We note that $g(\theta) = \sum_{m=-d}^d a_m e^{im\theta}$ for some constants $a_m$.  If it is the case that $|a_0| > 2\sum_{m\neq 0}|a_m|$, then $|g(\theta)|\geq |a_0|/2$ for all $\theta$, and $$|g'(\theta)| = \left|\sum_{m\neq 0} |m| |a_m| \right| \leq d |a_0|/2.$$  Thus, in this case, $|g'(\theta)|/|g(\theta)| \leq d < \epsilon^{-1}$ for all $\theta$.  Thus the probability in question is at most
$$
O(d^2 \epsilon)\pr_{X,Y}\left(|a_0| \leq 2 \sum_{m\neq 0} |a_m|\right).
$$
When $|a_0| \leq 2 \sum_{m\neq 0} |a_m|$, we have $|g(\theta)|\leq \sum |a_m|$ for all $\theta$, and the average value of $|g'(\theta)|^2$ is $$\sum_{m \neq 0} m^2 |a_m|^2 \geq \left(\sum |a_m| \right)^2/(8d).$$ Thus, $|g'(\theta)|/|g(\theta)| \geq 1/(8d)$ with constant probability.  Hence we have that
\begin{align*}
\pr(|p(X)| \leq \epsilon |D_Y p(X)|) & = O(d^2 \epsilon)\pr(|p(X)| \leq 8d|D_Y p(X)|) \\
& = O(d^2 \epsilon) \pr(|g(\theta)| \leq 8d|g'(\theta)|) \\
& = O(d^2 \epsilon) \pr(|p(X)| \leq 8d|D_Y p(X)|) \\
& = O(d^3\beta(p) \epsilon).
\end{align*}
\end{proof}

\section{Proof of the Main Theorem}\label{MainSec}

In this Section, we prove Proposition \ref{newMainProp} and thus Theorem \ref{newMainThm}.  We begin in Section \ref{regSec} by proving a recursive bound on average sensitivity for regular polynomial threshold functions.  In Section \ref{redSec}, we show a reduction to the regular case.  Finally, in Section \ref{pfSec}, we combine these recursive bounds to obtain a proof of Proposition \ref{newMainProp}.

\subsection{The Regular Case}\label{regSec}

Here we prove the reduction in the case of a regular polynomial.  In particular, we show:

\begin{prop}\label{NewBlockProp}
Let $d,n,\tau,a>0$ be real numbers and let $b\leq n$ be a positive integer.  Then
$$
\mrasa(d,n,a,\tau) \leq b\E_{\aleph}[\masa(d,n/b+1,\aleph)]
$$
for some non-negative random variable $\aleph$ with $\E[\aleph]=O(d^3 a b^{-1/2} + d^4 \tau^{1/(8d)}).$
\end{prop}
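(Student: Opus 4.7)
The plan is to decompose $\as(f)$ along $b$ blocks, charge each block to a random restriction (which produces the claimed recursion almost for free), and then bound $\E[\aleph]$ by passing to the Gaussian world via Proposition~\ref{absInvProp} and applying Lemma~\ref{alphaStrongAnticoncentrationLem} to $|\nabla p|/|p|$. First I would partition $\{1,\dots,n\}$ into blocks $I_1,\dots,I_b$ of size at most $\lceil n/b\rceil\le n/b+1$. The standard decomposition $\as(f)=\sum_j \E_\rho[\as(f_\rho^{(j)})]$ (with $\rho$ a uniform restriction of the coordinates outside $I_j$) together with $\as(f_\rho^{(j)})\le\masa(d,n/b+1,\alpha(p_\rho^{(j)}))$ yields the recursion immediately, with $\aleph:=\alpha(p_\rho^{(j)})$ for uniform $(j,\rho)$. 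It remains to show $\E[\aleph]=O(d^3 a b^{-1/2}+d^4\tau^{1/(8d)})$.

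Expanding the definition of $\alpha$ gives $\E[\aleph]=b^{-1}\sum_j \E_{A,B^{(j)}}[\min(1,|D_{B^{(j)}}p(A)|^2/|p(A)|^2)]$, where $B^{(j)}$ is Bernoulli on $I_j$. I would move to the Gaussian world by writing each block expectation via the layer cake $\int_0^1\pr(|p(A)|\le s^{-1}|D_{B^{(j)}}p(A)|)\,du$ with $s=\sqrt{u}$, and viewing the probability as $\pr(|p|\le|q_s|)$ for the multilinear polynomial $q_s(A,B):=s^{-1}\sum_{i\in I_j}B_i\partial_ip(A)$ on the joint hypercube $(A,B^{(j)})$. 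Normalizing $|p|_2=1$, the $B$-cross term vanishes, so $|p\pm q_s|_2^2=1+|q_s|_2^2\ge 1$; the $B_k$-influence of $q_s$ is $\Inf_k(p)/s^2\le\tau/s^2$, while the $A_\ell$-influence $s^{-2}\sum_{i\in I_j}|\partial_i\partial_\ell p|_2^2$ is bounded by $d\tau/s^2$ because $\partial_\ell p$ is degree $d-1$ with total influence at most $(d-1)\Inf_\ell(p)\le(d-1)\tau$. Proposition~\ref{absInvProp} then contributes per-level error $O(d(d\tau/s^2)^{1/(8d)})$, and integrating in $u$ is absolutely convergent, giving per-block error $O(d\tau^{1/(8d)})$ and
$$\E[\aleph]\le \frac{1}{b}\sum_{j=1}^b \beta_j(p)+O(d\tau^{1/(8d)}),\qquad \beta_j(p):=\E\!\left[\min\!\left(1,\frac{|D_{Y^{(j)}}p(X)|^2}{|p(X)|^2}\right)\right].$$

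For the Gaussian estimate, note that given $X$ the $D_{Y^{(j)}}p(X)$ are independent mean-zero Gaussians with variances $\sigma_j^2(X):=\sum_{i\in I_j}\partial_i p(X)^2$. Using $\E_Z[\min(1,cZ^2)]\le\min(1,c)$ on each block together with $\sum_j\min(1,x_j)\le\min(b,\sum_j x_j)$ collapses the block sums to
$$\sum_{j=1}^b\beta_j(p)\le \E_X\!\left[\min\!\left(b,\frac{|\nabla p(X)|^2}{|p(X)|^2}\right)\right]=\int_0^{\sqrt b}2s\,\pr(W\ge s)\,ds,$$
for $W:=|\nabla p(X)|/|p(X)|$. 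Since $D_Yp(X)=|\nabla p(X)|\cdot Z$ for an independent standard Gaussian $Z$, we have $\pr(W\ge s)\le O(1)\pr(|D_Yp|/|p|\ge s/2)$, and Lemma~\ref{alphaStrongAnticoncentrationLem} bounds this by $O(d^3\beta(p)/s)$ for $s\ge\poly(d)$. Integrating in three regimes---$s\le 1$, where the contribution equals $\E[\min(1,W^2)]\asymp\beta(p)$ using the matching lower bound $\E_Z[\min(1,cZ^2)]\gtrsim\min(1,c)$; $1\le s\le\poly(d)$, where Markov gives $\pr(W\ge 1)\le O(\beta(p))$; and $s\ge\poly(d)$, where strong anticoncentration applies---yields $\sum_j\beta_j(p)=O(d^3\beta(p)\sqrt b)$. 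A final application of Proposition~\ref{absInvProp} to $p$ and $t^{-1}D_Bp$ on the full joint hypercube gives $\beta(p)\le a+O(d\tau^{1/(8d)})$, and combining all estimates produces the claimed $\E[\aleph]=O(d^3 a b^{-1/2}+d^4\tau^{1/(8d)})$.

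The main technical obstacle will be the joint-space invariance application in the second paragraph: $q_s$ depends on both $A$ and $B^{(j)}$, so bounding its $A$-influences requires combining the degree-$d$ total-influence bound applied to $\partial_\ell p$ with $\tau$-regularity; moreover the $s$-dependent invariance error must survive integration through the layer cake near $s=0$, which works precisely because the exponent $1/(8d)$ in Proposition~\ref{absInvProp} keeps $s^{-1/(4d)}$ integrable against $du$.
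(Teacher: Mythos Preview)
Your proposal is correct and follows essentially the same architecture as the paper: block decomposition of $\as(f)$, definition of $\aleph$ as the block-restricted $\alpha$, and the combination of Proposition~\ref{absInvProp} with Lemma~\ref{alphaStrongAnticoncentrationLem} to bound $\E[\aleph]$. The one structural difference is the order in which you apply invariance and collapse the block sum. The paper first collapses in the Bernoulli world---passing from $\sum_\ell \alpha(p_{A^\ell})$ to $\E[\min(b,|\nabla p(A)|^2/|p(A)|^2)]$ and then to $\E[\min(b,|D_B p(A)|^2/|p(A)|^2)]$ via weak anticoncentration for the degree-one polynomial $B\mapsto D_B p(A)$---and only then applies Proposition~\ref{absInvProp} once, to the full-dimensional pair $(p,D_Bp)$. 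You instead apply invariance block by block (moving each $\alpha_j$ to a $\beta_j$), and perform the collapse in the Gaussian world where the relation $D_Yp(X)=|\nabla p(X)|Z$ is exact. Both routes work; the paper's ordering is a touch cleaner since it avoids the per-block bookkeeping and the $s$-dependent influence bound you had to track, while your ordering has the mild advantage that the Gaussian gradient-to-directional-derivative step needs no appeal to weak anticoncentration. Your handling of the $s^{-1/(4d)}$ integrability near $s=0$ is correct and is in fact implicitly needed in the paper's argument as well when it shows $\beta(p)=\alpha(p)+O(d\tau^{1/(8d)})$.
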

\begin{proof}
Consider $f=\sgn(p(x))$ for $p$ a $\tau$-regular, degree-$d$, multilinear polynomial in at most $n$ dimensions with $\var(p(A))=1$ and $\alpha(p)\leq a$.  It suffices to show that for all such $f$ that
$$
\as(f)  \leq b\E_{\aleph}[\masa(d,n/b+1,\aleph)]
$$
for an appropriate $\aleph$.

We begin by partitioning the coordinates of $f$ into $b$ blocks each of size at most $n/b+1$.  For each block, $\ell$, and Bernoulli random variable, $A$, we let $A^\ell$ be the coordinates of $A$ that do not lie in $\ell$.  We let $p_{A^\ell}$ be the function defined on the coordinates of $\ell$ obtained by plugging these values into $p$ for the other coordinates.  We define $f_{A^\ell}$ similarly.  It is not hard to see that
$$
\as(f) = \sum_\ell \E_{A^\ell}[\as(f_{A^\ell})].
$$

It thus suffices to show that
\begin{equation}\label{expectedAlphaEquation}
\sum_\ell \E_{A^\ell} [\alpha(p_{A^{\ell}})] = O(d^3 a \sqrt{b} + d^4 b \tau^{1/(8d)}).
\end{equation}

We have that
\begin{align*}
\sum_\ell \E_{A^\ell}[\alpha(p_{A^\ell})] & = \sum_\ell \E\left[\min\left(1,\frac{|D_B p_{A^\ell}(A)|^2}{|p(A)|^2}\right) \right]\\
& \leq O\left(\sum_\ell\E\left[\min\left(1,\frac{|\nabla p_{A^\ell}(A)|^2}{|p(A)|^2}\right) \right]\right)\\
& \leq  O\left( \E\left[\min\left(b,\frac{|\nabla p(A)|^2}{|p(A)|^2}\right) \right]\right)\\
& \leq O\left(\E\left[\min\left(b,\left(\frac{|D_B p(A)|}{|p(A)|}\right)^2\right) \right]\right).
\end{align*}

We note that
\begin{align*}
\E  \left[\min\left(b,\left(\frac{|D_B p(A)|}{|p(A)|}\right)^2\right) \right]
& = \E\left[\min\left(1,\left(\frac{|D_B p(A)|}{|p(A)|}\right)^2\right) \right] + \int_{1}^b \pr\left( |p(A)| \leq t^{-1/2}|D_B p(A)| \right) dt\\
& = \alpha(p) + \int_{1}^b \pr\left( |p(A)| \leq t^{-1/2}|D_B p(A)| \right) dt.
\end{align*}

To bound the second term above, we use an invariance principle to relate the necessary probabilities to those in the Gaussian case.  In order to do so we define the polynomial $q(A,B) = D_B p(A)$.  To show that $q$ has small influences we note that
\begin{align*}
\left| \frac{\partial q}{\partial b_i} \right|^2_2 = \left| \frac{\partial p}{\partial a_i} \right|^2_2 = \Inf_i(p)\leq \tau,
\end{align*}
and
\begin{align*}
\left| \frac{\partial p}{\partial a_i} \right|_2^2 & = \E\left[\left|\frac{\partial D_B p(A)}{\partial a_i} \right|^2 \right]\\
& = \E\left[\left|D_B \frac{\partial p(A)}{\partial a_i} \right|^2 \right]\\
& \leq d \E\left[\left|\frac{\partial p(A)}{\partial a_i} \right|^2 \right]\\
& = d \Inf_i(p)\\
& \leq d \tau.
\end{align*}
Where the middle line above is by Lemma \ref{derSizeLem}.  Thus all of the influences of $p$ and $q$ are at most $d\tau$.  Furthermore, it is easy to see that $p$ and $q$ have covariance 0 (since $p$ is even in $B$ and $q$ is odd in terms of $B$).  Thus we have for any real $s$ that
$$
|p+s q|_2 \geq |p|_2 \geq \var(p(A)) = 1.
$$
Therefore by Proposition \ref{absInvProp}, for any real $s$ we have that
\begin{align}\label{SAInvEqn}
\pr(|p(A)| \leq s |D_B p(A)|) & = \pr(|p(X)| \leq s |D_Y p(X)|) +O(d\tau^{1/(8d)}).
\end{align}

Applying Equation \eqref{SAInvEqn}, we find that
\begin{align*}
\beta(p) & = \int_0^1 \pr(|p(X)| \leq s^{-1/2} |D_Y p(X)|) ds\\
& = \int_0^1 \pr(|p(X)| \leq s^{-1/2} |D_Y p(X)|) ds + O(d\tau^{1/(8d)})\\
& = \alpha(p) + O(d\tau^{1/(8d)}).
\end{align*}

By Equation \eqref{SAInvEqn} and Lemma \ref{alphaStrongAnticoncentrationLem}, we find that
\begin{align*}
\int_{1}^b \pr\left( |p(A)| \leq t^{-1/2}|D_B p(A)| \right) dt & = \int_{1}^b \pr\left( |p(X)| \leq t^{-1/2}|D_Y p(X)| \right) dt + O(db\tau^{1/(8d)})\\
& = \int_{1}^b O(d^3 t^{-1/2} \beta(p)) db +O(db\tau^{1/(8d)})\\
& = O(d^3\sqrt{b} \beta(p)) +O( db\tau^{1/(8d)})\\
& = O(d^3 \sqrt{b} \alpha(p) + d^4 b \tau^{1/(8d)}).
\end{align*}

This completes the proof of Equation \eqref{expectedAlphaEquation}, as desired.
\end{proof}

\subsection{Reducing to the Regular Case}\label{redSec}

In this Section, we show by a simple application of Proposition \ref{regLem2Prop} that the average sensitivity of an arbitrary polynomial threshold function can be bounded in terms of the sensitivity of a regular one.  In particular, we show that:

\begin{prop}\label{NewRegReductionProp}
For any $d,n,a,\tau,\epsilon>0$ we have that
$$
\masa(d,n,a) \leq \tau^{-1}(d\log(\tau^{-1})\log(\epsilon^{-1}))^{O(d)} + 3n\epsilon + \E_\aleph[\mrasa(d,n,\aleph,\tau)],
$$
for some non-negative random variable $\aleph$ with $\E[\aleph]=a.$
\end{prop}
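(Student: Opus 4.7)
The plan is to apply the strong regularity decomposition of Proposition~\ref{regLem2Prop} to decompose $f = \sgn(p)$ along a shallow decision tree whose leaves are either $\tau$-regular PTFs or PTFs of nearly constant sign, and then to split the average sensitivity into the contribution from variables queried in the tree and the contribution from the restricted PTFs $f_\rho$ at the leaves.

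Given $f = \sgn(p)$ with $\alpha(p) \leq a$, I apply Proposition~\ref{regLem2Prop} with parameters $\tau$, $\epsilon$, and $\delta = \epsilon$, yielding a decision tree of depth at most $D := \tau^{-1}(d\log(\tau^{-1})\log(\epsilon^{-1}))^{O(d)}$, where the $\log(\delta^{-1})$ factor from Proposition~\ref{regLem2Prop} is absorbed into the $O(d)$ exponent. With probability at least $1 - \epsilon$, a random leaf $\rho$ satisfies either (a) $p_\rho$ is $\tau$-regular, or (b) $\sgn(p_\rho)$ is constant with probability at least $1 - \epsilon$. The standard decision-tree bound $\Inf_i(f) \leq \pr[x_i \textrm{ is queried along } \rho] + \E_\rho[\Inf_i(f_\rho)]$, summed over $i$, gives
$$
\as(f) \leq \E_\rho[|T_\rho|] + \E_\rho[\as(f_\rho)] \leq D + \E_\rho[\as(f_\rho)].
$$

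I then bound $\E_\rho[\as(f_\rho)]$ in three cases. For $\tau$-regular leaves, $\as(f_\rho) \leq \mrasa(d, n, \alpha(p_\rho), \tau)$ directly from the definition of $\mrasa$. For near-constant leaves (case (b)), each coordinate flip changes $f_\rho$ with probability at most $2\epsilon$, so $\as(f_\rho) \leq 2n\epsilon$. For the at most $\epsilon$-probability of remaining (bad) leaves, the trivial bound $\as(f_\rho) \leq n$ contributes at most $n\epsilon$. Taking $\aleph := \alpha(p_\rho)$ on $\tau$-regular leaves and $\aleph := 0$ elsewhere, combining these three contributions yields the stated inequality.

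The main technical obstacle is verifying $\E[\aleph] = a$. The natural approach is to write $\alpha(p)$ as a nested expectation, with outer variable the tree-path assignment (equivalently, the random leaf $\rho$) and inner variables the remaining coordinates together with the Bernoulli derivative direction $B$, and then to recognize the inner conditional expectation as $\alpha(p_\rho)$. The subtlety is that the partition into queried and free coordinates depends on $\rho$ itself, so one must expand $D_B p$ coordinate by coordinate and use the independence of the $B_i$ to match up the derivative direction for $p_\rho$ with the restriction of the derivative direction for $p$; once this Fubini-type argument is carried out, the identity follows.
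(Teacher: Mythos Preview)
Your outline is essentially identical to the paper's proof: apply Proposition~\ref{regLem2Prop} with $\delta=\epsilon$, bound $\as(f)$ by the tree depth plus the expected leaf sensitivity, split the leaves into regular, nearly-constant-sign, and bad, and account for these by $\mrasa(d,n,\alpha(p_\rho),\tau)$, $2n\epsilon$, and $n\epsilon$ respectively. The paper makes exactly the same choices.

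There is, however, a genuine gap in the last step, precisely where you flag it. The Fubini argument you describe does \emph{not} yield the identity $\E_\rho[\alpha(p_\rho)]=\alpha(p)$. After conditioning on $A$ (hence on the leaf $\rho$ and on $c_i:=\partial_i p(A)$, $C:=|p(A)|$), the quantity inside $\alpha(p)$ is $\E_B\bigl[\min(1,(\sum_{i}B_ic_i)^2/C^2)\bigr]$, while the quantity inside $\alpha(p_\rho)$ is $\E_{B'}\bigl[\min(1,(\sum_{i\notin S_\rho}B'_ic_i)^2/C^2)\bigr]$: the derivative direction for $p_\rho$ only runs over the free coordinates, and there is no way to ``match it up'' with the full $D_Bp$. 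In fact the identity is false already for $p(x_1,x_2)=x_1+0.95x_2+0.1$ with the tree that queries $x_2$.

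What \emph{is} true, and what suffices for the application (which only uses $\E[\aleph]=O(a)$), is the inequality $\E_\rho[\alpha(p_\rho)]\le 2\,\alpha(p)$. Writing $V=\sum_{i\in S_\rho}B_ic_i$ and $W=\sum_{i\notin S_\rho}B_ic_i$, symmetry of $V$ gives
\[
2\,\E\bigl[\min(1,(V{+}W)^2/C^2)\bigr]=\E\bigl[\min(1,(V{+}W)^2/C^2)+\min(1,(W{-}V)^2/C^2)\bigr]\ge \E\bigl[\min(1,W^2/C^2)\bigr],
\]
since $\max((V{+}W)^2,(W{-}V)^2)\ge W^2$. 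The paper's own proof simply asserts that the bound ``follows immediately from the definition of $\alpha$'' without further comment, so you are at least as careful as the paper here; but your stated mechanism for the step is not correct as written.
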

\begin{proof}
Let $p$ be a degree-$d$ polynomial in $n$ variables with $\alpha(p)\leq a$.  Let $f=\sgn\circ p$.  We will show that for an appropriately chosen $\aleph$ that
$$
\as(f) \leq \tau^{-1}(d\log(\tau^{-1})\log(\epsilon^{-1}))^{O(d)} + 3n\epsilon + \E_\aleph[\mrasa(d,n,\aleph,\tau)].
$$

We begin by writing $f$ as a decision tree as given to us in Proposition \ref{regLem2Prop} with $\delta$ set to $\epsilon$.  We claim that the average sensitivity of $f$ is at most the depth of the decision tree plus the expectation over leaves of the tree of the average sensitivity of the resulting function.  To show this we note that the average sensitivity of $f$ is equal to the expected number of coordinates, $i$ so that $f(A)$ disagrees with $f(A^i)$, where $A^i$ is obtained from $A$ by flipping the $i\tth$ coordinate.  We compute this probability by first conditioning on the path through the decision tree defined by $A$.  Except for a number of coordinates that is at most the depth of the tree, flipping the $i\tth$ coordinate leaves us in the same leaf.  The expected number of such coordinates that we can flip to change the sign of $f$ is at most the average sensitivity of the function corresponding to that leaf.  The expected number of other coordinates is at most the depth of the decision tree.  This completes the proof of this claim.

Thus we have
$$
\as(f) \leq \tau^{-1}(d\log(\tau^{-1})\log(\epsilon^{-1}))^{O(d)} + \E_{\textrm{leaves }\rho}[\as(f_\rho)].
$$
With probability $1-\epsilon$, $f_\rho$ is either $\tau$-regular or constant sign with probability $1-\epsilon$.  The contribution from the remaining $\epsilon$ probability set of leaves is at most $n\epsilon$, and the contribution from the leaves with nearly constant sign is at most $2n\epsilon$.  We thus need to bound the contribution from the leaves for which $f_\rho$ is $\tau$-regular.  This is an expectation of the average sensitivities of the threshold functions of $\tau$-regular, degree-$d$ polynomials in at most $n$ variables.  We have only to show that
$$
\E[\alpha(p_\rho)] \leq a.
$$
But this follows immediately from the definition of $\alpha$.
\end{proof}

\subsection{Putting it Together}\label{pfSec}

Here we combine Propositions \ref{NewBlockProp} and \ref{NewRegReductionProp} to prove Proposition \ref{newMainProp}.

First we need a Lemma:
\begin{lem}\label{smallAlphaLem}
Let $p$ be a degree-$d$ multilinear polynomial in $n$ variables and let $f(x)=\sgn(p(x))$.  There is a constant $K$, so that
if $\alpha(p) < (K\log(n))^{-d}$ then
$$
\as(f) = O(\alpha).
$$
\end{lem}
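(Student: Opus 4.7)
The plan is to show that $\alpha(p)\leq (K\log n)^{-d}$ forces $\mu:=\E[p(A)]$ to dominate $\sigma:=\sqrt{\var(p(A))}$ so strongly that $\sgn(p(A))=\sgn(\mu)$ except on a set of measure $O(\alpha/n)$. Since for each coordinate $i$ we have $\pr(f(A)\neq f(A^i))\leq 2\pr(f(A)\neq\sgn(\mu))$ by the triangle inequality (and the identical distribution of $A$ and $A^i$), this will give $\as(f)\leq 2n\cdot O(\alpha/n)=O(\alpha)$.

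The bridge between the $\alpha(p)$ hypothesis and the structural statement about $\mu,\sigma$ is the lower bound
$$
\alpha(p)\;\geq\; c(d)\cdot\frac{\sigma^2}{|p|_{B,2}^2}\qquad\text{with }c(d)=\Omega(81^{-d}d^{-4}).
$$
To prove it, I would first note that $\E_B[|D_Bp(A)|^2]=|\nabla p(A)|^2$ and $\E_A[|\nabla p(A)|^2]\geq\sigma^2$ (Corollary \ref{totalInfCor}), and then apply Paley-Zygmund twice together with the hypercontractivity estimates of Lemma \ref{hypLem}: once to the linear-in-$B$ polynomial $D_Bp(A)$, giving $|D_Bp|^2\geq|\nabla p|^2/2$ with probability $\geq 1/12$ conditional on $A$, and once to the degree-$2(d-1)$ polynomial $|\nabla p(A)|^2$ in $A$, giving $|\nabla p|^2\geq\sigma^2/2$ with probability $\Omega(9^{-d}d^{-2})$. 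Together these yield $\pr(|D_Bp(A)|^2\geq\sigma^2/2)\geq p_1:=\Omega(9^{-d}d^{-2})$. Markov's inequality controls $|p(A)|^2\leq (2/p_1)|p|_{B,2}^2$ except on a set of probability at most $p_1/2$, so intersecting with the previous event leaves probability $\geq p_1/2$, on which $|D_Bp|^2/|p|^2\gtrsim p_1\sigma^2/|p|_{B,2}^2$; taking expectation gives the claim.

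Given this lower bound, the hypothesis yields $\sigma^2/|p|_{B,2}^2=O(81^d d^4\alpha)$, which rearranges (using $|p|_{B,2}^2=\mu^2+\sigma^2$) to $|\mu|/\sigma\geq\Omega(9^{-d}d^{-2}\alpha^{-1/2})$. Because $(81^d d^4)^{1/d}=O(1)$, choosing $K$ to be a sufficiently large absolute constant guarantees $|\mu|/\sigma\geq 2(\log_2(n/\alpha))^{d/2}$. Applying the Bernoulli concentration bound of Corollary \ref{concentrationCor} to the degree-$d$ polynomial $p-\mu$ (with $L^2$-norm $\sigma$) then gives
$$
\pr(|p(A)-\mu|\geq|\mu|)\;=\;O\!\left(2^{-(|\mu|/(2\sigma))^{2/d}}\right)\;\leq\;O(\alpha/n),
$$
and since $|p(A)-\mu|<|\mu|$ forces $\sgn(p(A))=\sgn(\mu)$, the argument closes.

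The main obstacle is the intermediate lower bound $\alpha\gtrsim c(d)\sigma^2/|p|_{B,2}^2$. The two events one wants to intersect have very unequal probabilities: the ``large derivative'' event has only $\poly(d)9^{-d}$ probability, while the naive ``$|p|^2\leq 4|p|_{B,2}^2$'' bound leaves $1/4$ of the mass in the complement, too large to intersect usefully. The fix is to push the Markov threshold for $|p(A)|^2$ up by a factor of order $\poly(d)9^d$, at the price of inflating $c(d)$ by the same factor; since the $d$-th root of these exponential-in-$d$ blowups is $O(1)$, they are harmlessly absorbed into $K$.
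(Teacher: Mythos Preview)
Your argument is correct and follows essentially the same route as the paper: both establish the key inequality $\alpha(p)\gtrsim 2^{-O(d)}\var(p)/|p|_{B,2}^2$ via weak anticoncentration combined with Markov, deduce that $|\mu|/\sigma$ is large, and finish with the concentration bound of Corollary~\ref{concentrationCor}. The only cosmetic difference is that the paper applies Corollary~\ref{WeakAnticoncentrationCor} once to $D_Bp(A)$ as a degree-$d$ polynomial in $(A,B)$ jointly, whereas you split it into two applications (in $B$ then in $A$); this changes constants but not the structure.
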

\begin{proof}
We note that $\as(f)$ is at most $O(n)$ times the probability that $f$ takes on its less common value.  We note that by Corollary \ref{WeakAnticoncentrationCor} that with probability at least $2^{-O(d)}$ that
$$
|D_B p(A)|^2 \geq \E\left[|D_B p(A)|^2\right]/4 \geq \var(p)/4.
$$
Therefore, by the Markov inequality, there is a probability of at least $2^{-O(d)}$ that this occurs and that additionally
$$
|p(A)| \leq 2^{O(d)} |p|_2.
$$
Hence it is the case that
$$
\frac{\var(p)}{|p|_2^2} \leq 2^{O(d)} \alpha(p).
$$

Let $\mu=\E[p(A)].$  We have that
$$|p-\mu|_2 = \sqrt{\var(p)} \leq 2^{O(d)}|p|_2 \sqrt{\alpha(p)}.$$
Since $|p|_2^2 = \mu^2 + \var(p)$, we also have that
$$|p-\mu|_2 = \sqrt{\var(p)} \leq 2^{O(d)}|\mu| \sqrt{\alpha(p)}.$$
Hence for $\alpha(p) < (K\log(n))^{-d}$ for $K$ sufficiently small, we have by Corollary \ref{concentrationCor} $p(A)$ has the same sign as $\mu$ with probability $1-O(\alpha(p)n^{-1})$, yielding our desired bound.
\end{proof}

\begin{proof}[Proof of Proposition \ref{newMainProp}]
Let $\tau=n^{-1/3}$, $\epsilon=n^{-1}$, and $n>2^{Md^2\log(d)}$ for $M$ a sufficiently large constant.  By Proposition \ref{NewRegReductionProp} we have that
$$
\masa(d,n,a) \leq O(n^{1/2}) + \E_\aleph[\mrasa(d,n,\aleph,n^{-1/3})],
$$
for some $\aleph$ with $\E[\aleph]=O(a)$.  Let $b = \left\lceil n^{1/(16d)} \right\rceil $.  Applying Proposition \ref{NewBlockProp} to the above, we have that
\begin{align}\label{simpleReductionEqn}
\masa(d,n,a)& \leq O(n^{1/2}) + b\E_\aleph[\masa(d,n/b+1,\aleph)] \notag \\ & \leq O(n^{1/2}) + 2n^{1/(16d)}\E_\aleph[\masa(d,n^{1-1/(16d)},\aleph)].
\end{align}
Where above $\E[\aleph] = O(d^3 a b^{-1/2} + d^4 n^{-1/(24d)}).$  Notice that either $a\geq (K\log(n))^{-d}$, in which case, $\E[\aleph] = O(d^3 a b^{-1/2})$, or $a< (K\log(n))^{-d}$, in which case $\masa(d,n,a)<n^{1/3}$ by Lemma \ref{smallAlphaLem}.  Thus in any case, Equation \eqref{simpleReductionEqn} holds for some $\aleph$ with $\E[\aleph] = O(d^3 a b^{-1/2})$.

We now proceed by induction on $n$.  In particular, for a sufficiently large constant $M$, we prove by induction on $n$ that
\begin{equation}\label{finalInductionEquation}
\masa(d,n,a) \leq a \sqrt{n} (\log(n))^{Md\log(d)} 2^{Md^2\log(d)}.
\end{equation}
We begin by showing this for $n<2^{M d^2\log(d)}$.  For such $n$, the bound follows from Lemma \ref{smallAlphaLem} and the trivial bound of $n$.

Next suppose that Equation \eqref{finalInductionEquation} holds for all smaller values of $n$.  Bounding the $\masa(d,n,\aleph)$ terms in Equation \eqref{simpleReductionEqn} recursively, we obtain
\begin{align*}
\masa(d,n,a) & \leq O(n^{1/2}) + O(d^3) n^{1/(16d)} \E[\aleph] n^{1/2 - 1/(32d)} (\log(n)(1-1/(16 d))^{Md\log(d)} 2^{Md^2\log(d)}\\
& = O(n^{1/2}) + a O(d^3) \sqrt{n} (\log(n))^{Md\log(d)} d^{-\Omega(M)}2^{Md^2\log(d)}\\
& = O(n^{1/2}) + a \sqrt{n} (\log(n))^{Md\log(d)} O(d^{3-\Omega(M)})2^{Md^2\log(d)}\\
& \leq O(n^{1/2}) + a \sqrt{n} (\log(n))^{Md\log(d)}2^{Md^2\log(d)}/2.
\end{align*}
Where the last line holds when $M$ is sufficiently large.  Now, if $a> (K\log(n))^{-d}$, then this is at most $a\sqrt{n}(\log(n))^{Md\log(d)}2^{Md^2\log(d)}$, as desired.  If on the other hand, $a\leq (K\log(n))^{-d}$, the same bound follows instead from Lemma \ref{smallAlphaLem}.  In either case we have
$$
\masa(d,n,a) \leq a \sqrt{n} \log(n)^{Md\log(d)}2^{Md^2\log(d)}.
$$
This completes our inductive step and finishes the proof.
\end{proof}

\section{Concluding Remarks}

We believe that using techniques from \cite{DD}, that the bound on average sensitivity can be improved to
$$
\sqrt{n}O_d(\log(n))^{O(\log(d))}.
$$
The basic idea would be to use the diffuse regularity lemma and invariance principle instead of the standard ones in the proof above.  This allows us to take a number of blocks, $b$ polynomial in $n$ rather than $n^{1/\Theta(d)}$.  This decreases the number of rounds in our recursion by a factor of $d$, and thus lowers the asymptotic exponent by a corresponding factor.  Unfortunately, the poor dependence on degree in the technology from \cite{DD}, means that this bound will have perhaps a very bad dependence on $d$.

Although it seems that for fixed $d$ we have obtained nearly the correct asymptotic in terms of $n$, our dependence on $d$ is still fairly bad.  In particular, the bound given in Theorem \ref{newMainThm} does not improve upon the trivial bound of $n$ until $\log(n) \gg d^2\log(d)$.  The reason for this is that in our inductive step, we wish to replace $a b^{-1/2} + d \tau^{1/(8d)}$ by $O(a b^{-1/2})$, so long as $a\geq (K\log(n))^{-d}$ (since otherwise we can use simpler bounds).  On the other hand, using the easily established bound $\masa(d,n,a) = O(na)$, it is not hard to prove the bound $\as(f) \leq O(d^4)n^{1-1/(24d)}$, which is non-trivial for $n=2^{O(d\log(d))}$.

\section*{Acknowledgements}

This work was done with the support of an NSF postdoctoral fellowship.

\end{document}